\numberwithin{equation}{section}
\theoremstyle{plain}
\newtheorem{thmx}{Theorem}
\newtheorem{lmmx}[thmx]{Lemma}
\theoremstyle{definition}
\newtheorem{remx}[thmx]{Remark}
\theoremstyle{definition}
\theoremstyle{remark}
\newcommand{\ichi}{\mathbf{1}}
\newcommand{\C}{\mathbb{C}}
\newcommand{\N}{\mathbb{N}}
\newcommand{\R}{\mathbb{R}}
\newcommand{\T}{\mathbb{T}}
\newcommand{\Z}{\mathbb{Z}}
\newcommand{\calF}{\mathcal{F}}
\newcommand{\calS}{\mathcal{S}}
\newcommand{\pqr}{ L^{p_1} \times L^{p_2} \to L^{p} }
\newcommand{\pqramalgam}{ (L^{p_1}, \ell^{q_1}) \times 
(L^{p_2}, \ell^{q_2}) \to (L^{p}, \ell^{q}) }
\newcommand{\supp}{{\,\mathrm{supp}\,}}
\newcommand{\diam}{{\,\mathrm{diam}\,}}
\newcommand{\period}{{\,\mathrm{period}\,}}
\begin{document}
\title{Notes on bilinear lattice bump Fourier multipliers} 

\author[Kato]{Tomoya Kato}
\author[Miyachi]{Akihiko Miyachi}
\author[Tomita]{Naohito Tomita}

\address[T. Kato]
{Gunma University, Kiryu, Gunma 376-8515, Japan.}

\address[A. Miyachi]
{Tokyo Woman's Christian University, 
Tokyo 167-8585, Japan.}

\address[N. Tomita]
{Osaka University, Osaka 560-0043, Japan.}

\email[T. Kato]{t.katou@gunma-u.ac.jp}
\email[A. Miyachi]{miyachi@lab.twcu.ac.jp}
\email[N. Tomita]{tomita@math.sci.osaka-u.ac.jp}

\date{\today}

\keywords{\textit{Bilinear Fourier multiplier,
lattice bump Fourier multiplier, 
transference theorem, 
amalgam spaces, 
Wiener amalgam spaces}}  
\subjclass[2020]{42B15, 42B35}

\begin{abstract}
We consider the bilinear Fourier multiplier operator
with the multiplier written as a linear combination
of a fixed bump function.
For those operators
we prove two transference theorems,
one in amalgam spaces
and the other in Wiener amalgam spaces.
\end{abstract}

\maketitle


\section{Introduction}

For $\sigma \in L^{\infty}(\R^n \times \R^n)$, 
the {\it bilinear Fourier multiplier operator} 
$T_{\sigma}$ is defined by 
\begin{align*}
&
T_{\sigma} (f_1,f_2) (x) 
=
\iint
_{\R^n \times \R^n} 
 e^{ 2\pi i x\cdot (\xi_1 + \xi_2)}
\sigma (\xi_1, \xi_2) 
\widehat{f_1}(\xi_1) 
\widehat{f_2}(\xi_2)\, 
d\xi_1 d\xi_2, 
\\
& 
x\in \R^n, 
\quad 
f_1,f_2 \in \calS (\R^n), 
\end{align*}
where 
$\widehat{f_1}, \widehat{f_2}$ denote 
the Fourier transforms. 

Let $X_1,X_2$, and $Y$ be function spaces on $\R^n$ 
equipped with quasi-norms 
$\|\cdot \|_{X_1}$, $\|\cdot \|_{X_2}$, 
and $\|\cdot \|_{Y}$, 
respectively.  
If there exists a constant $C\in [0, \infty)$ such that 
\begin{equation*}
\|T_{\sigma}(f_1,f_2)\|_{Y}
\le C \|f_1\|_{X_1} \|f_2\|_{X_2}, 
\quad 
f_1\in \calS \cap X_1, 
\quad  
f_2\in \calS \cap X_2, 
\end{equation*}
then we denote 
the smallest possible $C$ by 
$\|T_{\sigma}\|_{X_1 \times X_2 \to Y}$.  
If there exists no such finite constant $C$, 
then we define 
$\|T_{\sigma}\|_{X_1 \times X_2 \to Y}=\infty$. 
We shall simply call $\|T_{\sigma}\|_{X_1 \times X_2 \to Y}$  
the operator norm of $T_{\sigma}$ in $X_1 \times X_2 \to Y$.

The bilinear Fourier multiplier operator 
was introduced by 
Coifman--Meyer \cite{CM-1975, 
CM-AIF, CM-Ast, CM-Beijing}  
and there have been many works. 
In the present article, 
we shall be interested in 
the multiplier of the following special form. 
For $a\in \ell^{\infty}(\Z^n \times \Z^n)$ and 
$\Phi\in C_{0}^{\infty}(\R^n \times \R^n)$, 
we define 
\[
\sigma_{a, \Phi}(\xi_1, \xi_2)=\sum_{\mu_1, \mu_2 \in \Z^n} 
a(\mu_1, \mu_2) \Phi (\xi_1 - \mu_1, \xi_2 - \mu_2), 
\quad \xi_1, \xi_2 \in \R^n. 
\]
For notational convenience, we write the corresponding bilinear 
operator as 
\[
T_{a, \Phi} = T_{\sigma_{a,\Phi}} . 
\]

The multiplier $\sigma_{a, \Phi}$ can be considered as a test case 
of more general bilinear Fourier multipliers.  
It was considered in some form or other in several papers. 
In the papers \cite{GHH, GHS}, 
the authors used the estimates for operators of the form 
$T_{a, \Phi}$ as key tools to prove boundedness of 
bilinear singular integrals with rough kernels.   
A study wholly focusing on $\sigma_{a, \Phi}$ was given recently 
by Bri\'ankova--Garafakos--He--Honz\'ik \cite{BGHH}, 
where the authors call $\sigma_{a, \Phi}$ the 
{\it lattice bump multiplier\/}. 
The main result of \cite{BGHH} gives estimate 
for the operator norm of  
$T_{a, \Phi}$ in $L^{p_1} \times L^{p_2} \to L^p$, 
$1/p=1/p_1+1/p_2$, in terms of 
$\|a\|_{\ell^{\infty}}$ and the cardinality of 
$\supp a$ (see Theorem 1.2 and Remark 1.1 of \cite{BGHH}), 
which generalize the estimates given in 
\cite{GHH, GHS}. 
In \cite{KMT-arxiv, KMT-arxiv-2, KMT-arxiv-3},  
the present authors 
considered bilinear Fourier multipliers $\sigma$ 
satisfying the estimates 
\[
\big| 
\partial_{\xi_1}^{\alpha}
\partial_{\xi_2}^{\beta}
\sigma (\xi_1, \xi_2) 
\big| 
\le C_{\alpha, \beta} W(\xi_1, \xi_2)
\]
with a fixed nonnegative function $W$, 
and gave some sufficient conditions on $W$ for 
$T_{\sigma}$ to be bounded in 
$L^2$-based amalgam spaces and in Wiener amalgam spaces. 
The results of \cite{KMT-arxiv, KMT-arxiv-2, KMT-arxiv-3} 
imply the estimates for the operator norm of $T_{a, \Phi}$ 
in terms of the absolute values $|a(\mu_1, \mu_2)|$, 
which cover the estimate of \cite{BGHH}.

In the present article, we shall not consider any particular 
estimates of the operator norm of $T_{a, \Phi}$ 
but we shall consider 
some {\it transference theorems\/} 
for $T_{a, \Phi}$. 
The transference theorem 
was first given by de Leeuw \cite{deLeeuw}, 
who proved that,   
under certain condition on the multiplier $m(\xi)$ on $\R$,  
if the Fourier multiplier operator   
$T_{m}$ is bounded in $L^{p}(\R)$, $p\in [1, \infty]$,  
then the periodic Fourier multiplier operators 
$T^{\period}_{m(\epsilon \cdot )}$, $\epsilon \in (0, \infty)$,  
are uniformly bounded in $L^{p}(\T)$, 
where $T_{m}$ and 
$T^{\period}_{m(\epsilon \cdot ) }$ are defined by 
\[
T_{m}f (x) = 
\int_{\R} 
e^{2\pi i x \xi} 
m(\xi) 
\widehat{f}(\xi)\, d\xi, 
\quad 
x \in \R, 
\quad 
f\in \calS (\R), 
\] 
and 
\[ 
T^{\period}_{m(\epsilon \cdot )} 
F (x) = 
\sum_{\mu \in \Z} 
e^{2\pi i x \mu} 
m(\epsilon \mu) 
\widehat{F}(\mu), 
\quad 
x \in \T=\R / \Z, 
\quad 
F \in C^{\infty} (\T) 
\] 
($\widehat{F}(\mu)$ denotes the Fourier coefficient of 
$F$). 
The converse to this theorem was given by 
Igari \cite[Theorem 2]{Igari 1969} and 
Stein--Weiss \cite[Theorems 3.18 
in Chapter VII]{stein-weiss}.  
Transference theorems were also given 
in several different settings; 
see  
\cite{Kenig-Tomas, Kaneko, Torres, Auscher-Carro, 
Fan, Kaneko-ESato}.   
Transference theorems for bilinear Fourier multipliers were 
given by Fan--Sato \cite{Fan-SSato}.

The purpose of the present article is to 
give two transference theorems 
for the bilinear operators $T_{a, \Phi}$.  
With $a\in \ell^{\infty}(\Z^n \times \Z^n)$, 
we shall associate two other operators. 
One is the bilinear Fourier multiplier operator $T^{\period}_{a}$ 
that acts on periodic functions and 
the other is the  bilinear operator $S_{a}$ that 
acts on sequence spaces. 
Under certain conditions on $\Phi$, 
we shall prove that 
$T_{a, \Phi}$ is bounded in amalgam spaces 
if and only if $T^{\period}_{a}$ is bounded in 
corresponding $L^p$ spaces, 
and 
$T_{a, \Phi}$ is bounded in Wiener amalgam spaces 
if and only if $S_{a}$ is bounded in 
corresponding $\ell^q$ spaces. 
Precise statements 
will be given in Theorems  
\ref{th-amalgam-main} and 
\ref{th-Wiener-amalgam-main}.

Most of the techniques used 
in the present article are in fact 
well-known in the 
theory of transference theorems. 
More directly, our arguments are 
modifications of those given in 
\cite{KMT-arxiv, KMT-arxiv-2, KMT-arxiv-3}.

Throughout this article, 
we use the following notations:   
$\langle z \rangle = (1+|z|^2)^{1/2}$ for $z\in \R^n$;  
$Q=(-1/2, 1/2]^{n}$ is the unit cube centered at the origin; 
$K Q=(-K/2, K/2]^n$ for $K\in (0, \infty)$; 
the Fourier transform of $f\in \calS (\R^n)$ 
is denoted by $\widehat{f}$ or by 
$\calF f$; 
the inverse Fourier transform is denoted by 
$\calF^{-1}$; 
for $m\in L^{\infty}(\R^n)$, 
the linear Fourier multiplier operator is defined by 
\[
m(D) f (x) 
=
\int
_{\R^n} 
 e^{ 2\pi i x\cdot \xi}
m (\xi) 
\widehat{f}(\xi)\, 
d\xi, 
\quad 
x\in \R^n, 
\quad 
f \in \calS (\R^n). 
\]

\section{The case of amalgam spaces}
\label{amalgam}

In this section, we shall give our 
first main theorem, which concerns 
the operator norm of $T_{a, \Phi}$ in amalgam spaces. 

We begin with the definition of amalgam spaces. 
For $p, q \in (0, \infty]$, 
the {\it amalgam space} $(L^p, \ell^q)$ is defined to be 
the set of 
all measurable functions $f$ on $\R^n$ 
such that 
\[
\|f \|_{(L^p, \ell^q)} 
=
\Big\| \big\|
\ichi_{Q} (x-k) f(x)
\big\|_{L^{p}_{x}(\R^n)} 
\Big\|_{\ell^q_{k}(\Z^n)}
=
\bigg\{ \sum_{k \in \Z^n}
\bigg( 
\int_{k+ Q} 
|f(x)|^{p}\, dx 
\bigg)^{q/p}
\bigg\}^{1/q}
< \infty,  
\]
where the representations of $\|\cdot \|_{L^p}$ and 
$\|\cdot \|_{\ell^q}$ need the usual modifications 
if $p=\infty$ or $q=\infty$.  
For properties of amalgam spaces, 
see  
Holland \cite{holland 1975} or 
Fournier--Stewart \cite{fournier stewart 1985}.

For a complex valued $L^1$-function $F$ on the torus 
$\T^n = \R^n/\Z^n$, 
we define its Fourier coefficient by 
\[
\widehat{F}(\mu) = \int_{\T^n} F(x) e^{- 2\pi i \mu \cdot x}\, dx, 
\quad 
\mu \in \Z^n. 
\]
(Although we use the same notation \;$\widehat{}$\;  
to denote both the Fourier coefficient and the Fourier transform, 
we shall use capital letters to denote functions 
on $\T^n$, which will help the reader to distinguish 
the Fourier coefficient from the Fourier transform.) 
For $a\in \ell^{\infty}(\Z^n \times \Z^n)$, 
we define the operator 
$T^{\period }_{a}$ by 
\begin{align*}
&T^{\period }_{a}(F_1,F_2) (x) 
=
\sum_{\mu_1, \mu_2 \in \Z^n} 
 e^{ 2\pi i x\cdot (\mu_1 + \mu_2)}
a(\mu_1, \mu_2) \widehat{F_1}(\mu_1) 
\widehat{F_2}(\mu_2), 
\\
& 
x\in \T^n, 
\quad 
F_1,F_2 \in C^{\infty}(\T^n). 
\end{align*}
For any 
$a\in \ell^{\infty}(\Z^n \times \Z^n)$, 
the operator $T^{\period }_{a}$ is a bilinear mapping 
from $C^{\infty}(\T^n) \times C^{\infty}(\T^n)$ 
to $C^{\infty}(\T^n)$. 
For 
$p_1, p_2, p\in (0, \infty]$, 
we define 
\[
\|T^{\period}_{a}
\|_{L^{p_1}\times L^{p_2}\to L^p}
=
\sup \left\{ 
\frac{
\|T^{\period}_{a} (F_1, F_2)\|_{L^p(\T^n)}}
{\|F_1\|_{L^{p_1}(\T^n)}
\|F_2\|_{L^{p_2}(\T^n)}}
\; \bigg| \; 
F_1, F_2 \in C^{\infty}(\T^n) \setminus \{0\}
\right\}. 
\]

Finally, to give our theorems, 
we need some condition that assures the 
map $a\mapsto \sigma_{a, \Phi}$ to be injective. 
For this we introduce the following: 
we say that a function $\Phi \in C_{0}^{\infty}(\R^d)$ 
satisfies the {\it condition (B)\/}
if there exists a point $\boldsymbol{\xi}^0 \in \R^d$ such that 
\begin{equation*}
\boldsymbol{\xi}^{0} 
\not\in 
\bigcup_{
\boldsymbol{\mu} \in \Z^d
\setminus \{0\}
}
\supp \Phi (\cdot - \boldsymbol{\mu} )
\quad \text{and} \quad 
\Phi (\boldsymbol{\xi}^{0})\neq 0. 
\end{equation*}

Now the following is the first main theorem of this article.

\begin{thmx}\label{th-amalgam-main}
Let $\Phi \in C_{0}^{\infty}(\R^n \times \R^n)$ satisfy 
the condition (B) and 
let 
$p_1, p_2, p$, $q_1, q_2, q \in (0, \infty]$ 
satisfy 
$1/q_1+1/q_2 \ge 1/q$.  
Then there exists a constant $c\in (0, \infty)$ 
depending only on 
$n, p_1,p_2,p, q_1, q_2, q$, and $\Phi$, 
such that 
\[ 
c^{-1} \|T^{\period}_{a}\|_{\pqr} 
\le  \|T_{a,\Phi}\|_{\pqramalgam}
\le c  \|T^{\period }_{a}\|_{\pqr} 
\]
for all 
$a \in \ell^{\infty}(\Z^n\times \Z^n)$. 
\end{thmx}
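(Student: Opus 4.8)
The plan is to extract both inequalities from one structural identity for the convolution kernel of $T_{a,\Phi}$. Writing $\Psi=\calF^{-1}\Phi$, a Schwartz function on $\R^n\times\R^n$, and letting $A$ be the $\Z^n\times\Z^n$-periodic distribution $A(z_1,z_2)=\sum_{\mu_1,\mu_2}a(\mu_1,\mu_2)e^{2\pi i(\mu_1\cdot z_1+\mu_2\cdot z_2)}$, which is the kernel of $T^{\period}_a$ on one period, one checks (frequency translation of $\Phi$ being modulation of $\Psi$) that $T_{a,\Phi}$ is the bilinear operator with kernel $\Psi\cdot A$, i.e.
\[
T_{a,\Phi}(f_1,f_2)(x)=\iint \Psi(x-y_1,x-y_2)\,A(x-y_1,x-y_2)\,f_1(y_1)f_2(y_2)\,dy_1\,dy_2 .
\]
Thus $T_{a,\Phi}$ carries a fixed Schwartz window $\Psi$ against the very kernel $A$ that defines $T^{\period}_a$; everything below is bookkeeping around this bilinear, amalgam-space version of the de Leeuw/Stein--Weiss picture. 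When $p,q<1$ the triangle inequalities are replaced by their quasi-norm analogues, which I suppress.

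\textbf{Lower bound.} Here I would exhibit explicit test functions. Using condition (B), fix $\boldsymbol{\xi}^0=(\xi_1^0,\xi_2^0)$ with $\Phi(\boldsymbol{\xi}^0)\neq0$ and $\boldsymbol{\xi}^0\notin\supp\Phi(\cdot-\boldsymbol{\mu})$ for every $\boldsymbol{\mu}\in(\Z^n\times\Z^n)\setminus\{0\}$; since these supports are closed, choose $\varphi\in C_0^\infty(\R^n)$ with $\varphi(0)\neq0$ and support so small that no nonzero lattice translate of $\supp\Phi$ meets $\boldsymbol{\xi}^0+\supp(\varphi\otimes\varphi)$. For trigonometric polynomials $C_1,C_2$ on $\T^n$ put $f_j=e^{2\pi i\xi_j^0\cdot}\,(\calF^{-1}\varphi)\,C_j$, so that $\widehat{f_j}$ is supported in $\bigcup_{\mu}(\mu+\xi_j^0+\supp\varphi)$. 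On this set $\sigma_{a,\Phi}$ collapses to its diagonal term, and a direct computation gives the exact factorization
\[
T_{a,\Phi}(f_1,f_2)=\Theta\cdot T^{\period}_a(C_1,C_2),\qquad \Theta=\calF^{-1}\big[\Phi(\eta_1,\eta_2)\,\varphi(\eta_1-\xi_1^0)\,\varphi(\eta_2-\xi_2^0)\big],
\]
where $\Theta$ is a fixed Schwartz function, not identically zero because $\Phi(\boldsymbol{\xi}^0)\varphi(0)^2\neq0$. Since $\calF^{-1}\varphi$ and $\Theta$ are Schwartz while $C_j$ and $T^{\period}_a(C_1,C_2)$ are periodic (so their $L^{r}$ norm over any unit cube equals their $L^r(\T^n)$ norm), one gets $\|f_j\|_{(L^{p_j},\ell^{q_j})}\lesssim\|C_j\|_{L^{p_j}(\T^n)}$ and $\|T_{a,\Phi}(f_1,f_2)\|_{(L^p,\ell^q)}\gtrsim\|T^{\period}_a(C_1,C_2)\|_{L^p(\T^n)}$, with constants independent of $a$ and of $C_1,C_2$. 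Taking the supremum over $C_1,C_2$ yields $c^{-1}\|T^{\period}_a\|_{\pqr}\le\|T_{a,\Phi}\|_{\pqramalgam}$.

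\textbf{Upper bound.} For arbitrary $f_1,f_2$ I would split $f_j=\sum_{k}\ichi_{k+Q}f_j$ and estimate $\ichi_{k+Q}T_{a,\Phi}(f_1,f_2)$ over each block $(k;k_1,k_2)$. Translating the three cubes to $Q$, the periodicity of $A$ makes its argument land in a fixed period, so the $A$-factor is exactly the torus kernel, while the window contributes $\Psi$ evaluated near $(k-k_1,k-k_2)$. To decouple the window (which is nonlocal, and multiplies a distribution) from $A$, expand $\Psi$, restricted to the translated block, in a Fourier series in the two cube variables $u_j=x'-y_j'$; writing $e^{2\pi i(l_1\cdot u_1+l_2\cdot u_2)}$ turns each block into a rapidly convergent sum over modes $(l_1,l_2)$ of terms $e^{2\pi i(l_1+l_2)\cdot x'}\,T^{\period}_a(M_{l_1}G_1,M_{l_2}G_2)(x')$, where $G_j$ is $f_j$ on the input cube read as a function on $\T^n$ and $M_l$ is modulation by $l$. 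Modulation by an integer frequency preserves every $L^r(\T^n)$ norm, so each such term has $L^p$ norm $\le\|T^{\period}_a\|_{\pqr}\|G_1\|_{L^{p_1}}\|G_2\|_{L^{p_2}}$. Summing the modes and using the Schwartz decay of $\Psi$ in both the mode and offset variables produces a factored rapidly decreasing weight $w$, giving
\[
\big\|\ichi_{k+Q}\,T_{a,\Phi}(f_1,f_2)\big\|_{L^p}\lesssim \|T^{\period}_a\|_{\pqr}\sum_{k_1,k_2\in\Z^n} w(k-k_1)\,w(k-k_2)\,\|\ichi_{k_1+Q}f_1\|_{L^{p_1}}\,\|\ichi_{k_2+Q}f_2\|_{L^{p_2}} .
\]
The right-hand side is $(w*u)_k\,(w*v)_k$ with $u_k=\|\ichi_{k+Q}f_1\|_{L^{p_1}}$, $v_k=\|\ichi_{k+Q}f_2\|_{L^{p_2}}$; taking the $\ell^q_k$ norm and using H\"older into $\ell^{s}$ with $1/s=1/q_1+1/q_2$ followed by the embedding $\ell^s\hookrightarrow\ell^q$ (valid exactly because $1/q_1+1/q_2\ge1/q$) and Young's inequality $\|w*u\|_{\ell^{q_1}}\le\|w\|_{\ell^1}\|u\|_{\ell^{q_1}}$ gives $\|T_{a,\Phi}(f_1,f_2)\|_{(L^p,\ell^q)}\le c\,\|T^{\period}_a\|_{\pqr}\|f_1\|_{(L^{p_1},\ell^{q_1})}\|f_2\|_{(L^{p_2},\ell^{q_2})}$.

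\textbf{Main obstacle.} The delicate direction is the upper bound: $\Psi$ is Schwartz but not compactly supported while $A$ is only a periodic distribution, so $|\Psi A|$ cannot be handled pointwise, and the Fourier expansion of $\Psi$ over each translated unit block is precisely the device that legitimately separates the slowly varying but nonlocal window from the torus operator. The estimate then hinges on the resulting double sum, over cube offsets $(k-k_1,k-k_2)$ and Fourier modes $(l_1,l_2)$, converging with weights that survive the passage to the $\ell^q$ quasi-norm when $p,q<1$. By contrast the lower bound is soft once the exact factorization $T_{a,\Phi}(f_1,f_2)=\Theta\cdot T^{\period}_a(C_1,C_2)$ is secured, its only real input being condition (B), which both forces the diagonal collapse of $\sigma_{a,\Phi}$ and guarantees $\Theta\not\equiv0$.
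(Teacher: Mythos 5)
Your overall architecture matches the paper's: the lower bound via test functions built from condition (B) and the factorization $T_{a,\Phi}(f_1,f_2)=\Theta\cdot T^{\period}_a(C_1,C_2)$ (this is exactly the paper's identity \eqref{eq-iii}), and the upper bound via a decomposition into pieces on which $T^{\period}_a$ acts, closed by H\"older into $\ell^{s}$ with $1/s=1/q_1+1/q_2$ and the embedding $\ell^{s}\hookrightarrow\ell^{q}$. But both halves have a concrete gap. In the lower bound, $\Theta\not\equiv0$ is not enough. To pass from the factorization to $\|T_{a,\Phi}(f_1,f_2)\|_{(L^p,\ell^q)}\gtrsim\|T^{\period}_a(C_1,C_2)\|_{L^p(\T^n)}$ uniformly in $a$ and $C_1,C_2$, you need $|\Theta|$ (or its periodization $\sum_{\nu}|\Theta(x+\nu)|^p$) bounded below on a full fundamental domain: a nonzero Schwartz function can be arbitrarily small, or even vanish identically, on a whole coset $x_0+\Z^n$, and then a periodic $T^{\period}_a(C_1,C_2)$ concentrated near $x_0$ defeats the estimate. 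The paper arranges precisely this by choosing and normalizing the bumps $\theta_j$ so that $|g(x)|\ge1$ for all $x\in Q$ (condition \eqref{thetaPhi}). The fix is easy, but it is the one quantitative step where condition (B) is actually used, and it is missing.

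The upper bound is where your route genuinely diverges from the paper's, and the obstruction there is more serious. The difference variables $u_j=x'-y_j'$ range over $Q-Q=(-1,1)^n$, a cube of side $2$, so the Fourier expansion of $\Psi$ on the translated block produces frequencies in $\tfrac12\Z^n$, not $\Z^n$. Modulation by a half-integer frequency does not preserve $\Z^n$-periodicity, and $T^{\period}_a$ is defined, through the coefficients $a(\mu_1,\mu_2)$ with $\mu_j\in\Z^n$, only on $\Z^n$-periodic inputs; the pivotal line ``modulation by an integer frequency preserves every $L^r(\T^n)$ norm'' therefore does not apply to the modes your expansion actually yields. (One can patch this by re-periodizing the restriction to $Q$ of each modulated block, but that has to be stated and reconciled with the fact that $A$ is only a periodic distribution being paired against the non-smooth cutoffs $\ichi_{k_j+Q}f_j$.) Two further points need repair: the raw restriction of $\Psi$ to a block has boundary jumps, so its Fourier coefficients decay only like $O(|l|^{-1})$ per coordinate rather than rapidly --- you must first multiply by a smooth bump equal to $1$ on the block; and for $q_1\le1$ Young's inequality must be taken in the form $\|w*u\|_{\ell^{q_1}}\le\|w\|_{\ell^{q_1}}\|u\|_{\ell^{q_1}}$. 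The paper avoids all of these issues by working on the frequency side: $\Phi$ is expanded in a Fourier series on $KQ\times KQ$, which produces translations $\phi_{k_j}(\xi)=e^{2\pi iK^{-1}k_j\cdot\xi}\phi(\xi)$ (harmless, since translations do not interact with the lattice), the oscillatory factor $e^{2\pi ix\cdot\xi}$ for $x\in Q$ is handled by Taylor rather than Fourier expansion, and the resulting periodic inputs $F_{k_1,\rho,\alpha}$ are estimated by Poisson summation.
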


Before we give the proof of this theorem, 
we give some remarks.

\begin{remx}\label{remark-amalgam} 
(1) The amalgam space $(L^p, \ell^q)$ coincides with 
the Lebesgue space $L^p$ if $p=q$. 
Hence 
the following assertion 
is a special case of Theorem \ref{th-amalgam-main}: 
If $\Phi \in C_{0}^{\infty}(\R^n \times \R^n)$ satisfy 
the condition (B) and if 
$p_1, p_2, p\in (0, \infty]$ 
satisfy 
$1/p_1+1/p_2 \ge 1/p$, 
then there exists a constant $c\in (0, \infty)$ 
depending only on 
$n, p_1,p_2,p$, and $\Phi$, 
such that 
\begin{equation*} 
c^{-1} \|T^{\period}_{a}\|_{\pqr} 
\le  \|T_{a,\Phi}\|_{\pqr}
\le c  \|T^{\period }_{a}\|_{\pqr} 
\end{equation*}
for all 
$a \in \ell^{\infty}(\Z^n\times \Z^n)$, 
where the spaces 
$L^{p_1}, L^{p_2}, L^p$ in the 
quasi-norms of 
$T^{\period}_{a}$ and 
$T_{a,\Phi}$ are the spaces on $\T^n$ and 
on $\R^n$, respectively. 

(2) The latter inequality 
\[ 
\|T_{a,\Phi}\|_{\pqramalgam} 
\le c  \|T^{\period }_{a}\|_{\pqr} 
\] 
in the conclusion of Theorem \ref{th-amalgam-main}  
holds for all $\Phi \in C_{0}^{\infty}(\R^n)$, 
without the condition (B). 
This will be seen from the proof to be given below. 

(3) 
The assumption 
$1/q_1 + 1/q_2 \ge 1/q$ in Theorem \ref{th-amalgam-main} 
gives no essential restriction. 
In fact, 
$T_{\sigma}$ with a nontrivial 
$\sigma \in L^{\infty}(\R^n \times \R^n)$ 
has a finite operator norm in 
$\pqramalgam$ only if $1/q_1 + 1/q_2 \ge 1/q$. 
For a proof of this fact, see 
Lemma \ref{lem-Holder-A} in Appendix.  
\end{remx}

Now we shall proceed to the proof of 
Theorem \ref{th-amalgam-main}. 
The proof 
is a modification of the arguments  
given in \cite{KMT-arxiv, KMT-arxiv-2}. 
We shall divide the proof into 
two parts, 
proof of the latter inequality and 
proof of the former inequality. 
In the proofs, 
$a(\mu_1, \mu_2)$ denotes an arbitrary sequence in 
$\ell^{\infty}(\Z^n \times \Z^n)$. 
We use the letter 
$c$ to denote 
positive constants with 
the same properties as $c$ of the theorem. 
Notice that $c$ in different places may not be 
the same constant.

\begin{proof}[{Proof of the latter inequality 
of Theorem \ref{th-amalgam-main}}] 
Here we shall prove the inequality 
\begin{equation}\label{eq-amalgam-latter} 
\|T_{a,\Phi}\|_{\pqramalgam} 
\le c  \|T^{\period }_{a}\|_{\pqr}.  
\end{equation}
Here we don't need the condition (B).

%

First we follow the methods 
of Coifman--Meyer \cite{CM-AIF, CM-Ast}  
to write $T_{a,\Phi}$ as a superposition of 
simple operators of product forms.

Take a number 
$K\in (0, \infty)$ that satisfies 
$\supp \Phi \subset 2^{-1}KQ \times 2^{-1}KQ$ 
and take a function $\phi$ such that 
\[
\phi \in C_{0}^{\infty}(\R^n ), 
\quad 
\phi (\xi)=1
\;\;\text{on}\;\; 2^{-1}K Q, 
\quad 
\supp \phi \subset 
KQ. 
\]
Since 
$\supp \Phi \subset 2^{-1}KQ \times 2^{-1}KQ$ 
we use the Fourier series expansion 
on 
$KQ \times KQ$ to write $\Phi$ as 
\[
\Phi (\xi_1, \xi_2)
=
\sum_{k_1, k_2  \in \Z^n} 
b(k_1, k_2 ) 
e^{2\pi i K^{-1}(\xi_1 \cdot k_1 + \xi_2 \cdot k_2 )}, 
\quad 
(\xi_1, \xi_2) \in KQ \times KQ, 
\]
where $\{b(k_1, k_2 )\}$ is a rapidly decreasing sequence. 
Multiplying this by 
$\phi (\xi_1)\phi (\xi_2 )$, 
we have 
\begin{align}
\Phi (\xi_1, \xi_2)
&
=
\sum_{k_1, k_2  \in \Z^n} 
b(k_1, k_2 ) 
e^{2\pi i K^{-1}(\xi_1 \cdot k_1 + \xi_2 \cdot k_2 )}
\phi (\xi_1)
\phi (\xi_2 )
\nonumber 
\\
&
=
\sum_{k_1, k_2  \in \Z^n} 
b(k_1, k_2 ) 
( \phi_{k_1}\otimes 
\phi_{k_2}) 
(\xi_1, \xi_2), 
\label{eq-Phi-otimes}
\end{align}
where 
\begin{align*}
&
( \phi_{k_1}\otimes 
\phi_{k_2}) 
(\xi_1, \xi_2)
=
\phi_{k_1}(\xi_1) \phi_{k_2}(\xi_2), 
\\
&
\phi_{k_j}(\xi_j)
= 
e^{2\pi i K^{-1}\xi_j \cdot k_j }
\phi (\xi_j),  
\quad 
j=1,2. 
\end{align*}
Thus 
\begin{align}
\sigma_{a, \Phi} (\xi_1, \xi_2)
&=
\sum_{k_1, k_2}\, 
\sum_{\mu_1, \mu_2}\, 
a(\mu_1, \mu_2)
b(k_1, k_2)  
( \phi_{k_1}\otimes 
\phi_{k_2}) 
(\xi_1 - \mu_1, \xi_2-\mu_2)
\nonumber
\\
&=
\sum_{k_1, k_2}\, 
b(k_1, k_2) \sigma_{ a, 
\phi_{k_1}\otimes 
\phi_{k_2}} 
(\xi_1, \xi_2).  
\label{eq-TaPhi-Taotimes}
\end{align}
Since the sequence $\{ b(k_1, k_2)\} $ is rapidly decreasing, 
in order to prove \eqref{eq-amalgam-latter} 
it is sufficient to prove the 
estimate 
\begin{equation}\label{TaPhik1k2lePa}
\|T_{a, \phi_{k_1}\otimes \phi_{k_2}}\|_{\pqramalgam} 
\le c\,   
\|T^{\period }_{a}\|_{\pqr};  
\end{equation}
recall that $c$ 
should not depend 
on $k_1, k_2$.

Now let $f_1, f_2 \in \calS (\R^n)$. 
To calculate 
the $(L^p, \ell^q)$-quasi-norm of a function, 
it is convenient to write the variables of $\R^n$ 
as $x+\rho$ with $x\in Q$ and $\rho \in \Z^n$. 
Thus let 
$x\in Q$ and $\rho\in \Z^n$.
We have 
\begin{align*}
&
T_{  a, \phi_{k_1} \otimes \phi_{k_2} }
(f_1,f_2)(x+\rho)
\\
&=
\sum_{\mu_1, \mu_2\in \Z^n}\,  
\iint_{\xi_1, \xi_2 \in \R^n} 
a(\mu_1, \mu_2) 
e^{2\pi i (x+\rho)\cdot (\xi_1 + \xi_2)} 
\\
&
\quad \quad \quad 
\times 
\phi_{k_1}(\xi_1 - \mu_1) 
\phi_{k_2}(\xi_2 - \mu_2) 
\widehat{f_1}(\xi_1)
\widehat{f_2}(\xi_2)\, 
d\xi_1 d\xi_2
\\
&
=
\sum_{\mu_1, \mu_2\in \Z^n}\, 
\iint_{\xi_1, \xi_2 \in \R^n} 
a(\mu_1, \mu_2) 
e^{2\pi i (x+\rho)\cdot (\xi_1 +\mu_1 + \xi_2+\mu_2)} 
\\
&
\quad \quad \quad 
\times 
\phi_{k_1}(\xi_1) 
\phi_{k_2}(\xi_2) 
\widehat{f_1}(\xi_1 + \mu_1)
\widehat{f_2}(\xi_2 + \mu_2)\, 
d\xi_1 d\xi_2
\\
&
=(\ast). 
\end{align*}
Here notice that 
$e^{2\pi i \rho \cdot (\mu_1 + \mu_2)}=1$ 
since 
$\rho \cdot (\mu_1 + \mu_2)$ are integers. 
We write 
\begin{align*}
&
e^{2\pi i (x+\rho) \cdot (\xi_1 + \mu_1 + \xi_2 + \mu_2)} 
=
e^{2\pi i x \cdot (\xi_1 + \xi_2)}
e^{2\pi i x \cdot (\mu_1 + \mu_2)}
e^{2\pi i \rho \cdot (\xi_1+\xi_2)}  
\\
&=
e^{2\pi i x \cdot (\mu_1 + \mu_2)}
e^{2\pi i \rho \cdot \xi_1 }
e^{2\pi i \rho \cdot \xi_2}
\sum_{\alpha}
\frac{1}{\alpha !} (2\pi i)^{|\alpha|} 
x^{\alpha}\xi_1^{\alpha}
\sum_{\beta}
\frac{1}{\beta !} (2\pi i)^{|\beta|} 
x^{\beta}\xi_2^{\beta}, 
\end{align*}
where the sums are taken 
over all multi-indices $\alpha$ and $\beta$.  
Thus 
\begin{align*}
(\ast)
&=
\sum_{\mu_1, \mu_2} 
\sum_{\alpha, \beta} 
a(\mu_1, \mu_2)
\frac{(2\pi i)^{|\alpha|} }{\alpha !} \frac{(2\pi i)^{|\beta|} } {\beta !} 
x^{\alpha+\beta}
e^{2\pi i x \cdot (\mu_1 + \mu_2)}
\\
&\quad 
\times 
\bigg( \int_{\R^n} 
e^{2\pi i \rho \cdot \xi_1} 
\phi_{k_1} (\xi_1)\, \xi_1^{\alpha}\, 
\widehat{f_1}(\xi_1+ \mu_1)\, d\xi_1\bigg)
\bigg( \int_{\R^n} 
e^{2\pi i \rho \cdot \xi_2} 
\phi_{k_2} (\xi_2)\, \xi_2^{\beta}\,  
\widehat{f_2}(\xi_2+\mu_2)\, d\xi_2\bigg)
\\
&
=(\ast\ast). 
\end{align*}
We define 
$F_{k_1, \rho, \alpha}, G_{k_2, \rho, \beta} \in C^{\infty}(\T^n)$ 
so that their Fourier coefficients are given by  
\begin{align*}
&
(F_{k_1, \rho, \alpha})^{\wedge}(\mu)
=\int_{\R^n} 
e^{2\pi i \rho \cdot \xi_1} 
\phi_{k_1} (\xi_1)\, \xi_1^{\alpha}\,  
\widehat{f_1}(\xi_1+ \mu)\, d\xi_1, 
\quad \mu \in \Z^n, 
\\
&
(G_{k_2, \rho, \beta})^{\wedge}(\mu)
=\int_{\R^n} 
e^{2\pi i \rho \cdot \xi_2 } 
\phi_{k_2} (\xi_2)\,  \xi_2^{\beta}\,  
\widehat{f_2}(\xi_2 + \mu)\, d\xi_2, 
\quad \mu \in \Z^n. 
\end{align*}
Then 
\begin{align*}
(\ast\ast)
&=
\sum_{\mu_1, \mu_2} 
\sum_{\alpha, \beta} 
a(\mu_1, \mu_2)
\frac{(2\pi i)^{|\alpha|} }{\alpha !} \frac{(2\pi i)^{|\beta|} } {\beta !} 
x^{\alpha+\beta}
e^{2\pi i x \cdot (\mu_1 + \mu_2)}
(F_{k_1, \rho, \alpha})^{\wedge}(\mu_1)
(G_{k_2, \rho, \alpha})^{\wedge}(\mu_2)
\\
&
=
\sum_{\alpha, \beta} 
\frac{(2\pi i)^{|\alpha|} }{\alpha !} \frac{(2\pi i)^{|\beta|} } {\beta !} 
x^{\alpha+\beta}\, 
T^{\period }_{a}(F_{k_1, \rho, \alpha}, G_{k_2, \rho, \alpha})(x). 
\end{align*}
Thus we obtain 
\begin{align*}
&T_{a, \phi_{k_1} \otimes \phi_{k_2} }
(f_1,f_2) (x+ \rho ) 
\\
&=
\sum_{\alpha, \beta} 
\frac{(2\pi i)^{|\alpha|} }{\alpha !} \frac{(2\pi i)^{|\beta|} } {\beta !} 
x^{\alpha+\beta}\, 
T^{\period }_{a}(F_{k_1, \rho, \alpha}, G_{k_2, \rho, \alpha})(x), 
\quad 
x \in Q, \quad 
\rho \in \Z^n. 
\end{align*}

From the last formula, we have 
\begin{align*}
&
\| 
T_{a, \phi_{k_1} \otimes \phi_{k_2} }
(f_1,f_2)
\|_{(L^{p}, \ell^{q})}
=
\Big\| \big\| 
T_{a, \phi_{k_1} \otimes \phi_{k_2} } (f_1,f_2)(x+\rho)
\big\|_{L^{p}_{x}(Q)}
\Big\|_{
\ell^{q}_{\rho}(\Z^n)}
\\
&=
\Bigg\| 
\bigg\| 
\sum_{\alpha, \beta} 
\frac{(2\pi i)^{|\alpha|} }{\alpha !} \frac{(2\pi i)^{|\beta|} } {\beta !} 
x^{\alpha+\beta}
\, 
T^{\period }_{a}(F_{k_1, \rho, \alpha}, G_{k_2, \rho, \alpha})(x)
\bigg\|_{L^{p}_{x}(Q)} 
\Bigg\|_{\ell^{q}_{\rho}(\Z^n)}
\\
&
\le 
\bigg\{ 
\sum_{\alpha, \beta} 
\bigg(
\frac{(2\pi)^{|\alpha|} }{\alpha !} \frac{(2\pi)^{|\beta|} } {\beta !} 
\Big\|
\big\| T^{\period }_{a}
(F_{k_1, \rho, \alpha}, G_{k_2, \rho, \alpha})(x)
\big\|_{L^{p}_{x}(Q)} 
\Big\|_{\ell^{p}_{\rho}(\Z^n)}
\bigg)^{\epsilon}
\bigg\} ^{1/\epsilon}
\\
&=
(\ast\ast\ast)
\end{align*}
with $\epsilon= \min \{p, q, 1\}$. 
We set 
$1/q_1+1/q_2=1/s$. 
Our assumption implies 
$1/s\ge 1/q$ and hence  
the embedding 
$\ell^{s}\hookrightarrow \ell^{q}$ holds. 
Thus, 
the definition of 
$\|T^{\period }_{a}\|_{\pqr}$, 
the embedding 
$\ell^{s}\hookrightarrow \ell^{q}$, 
and H\"older's inequality with exponents $1/q_1+1/q_2=1/s$ 
yield   
\begin{align*}
&
\Big\| 
\big\| T^{\period }_{a}
(F_{k_1, \rho, \alpha}, G_{k_2, \rho, \alpha})(x)
\big\|_{L^{p}_{x}(Q)} 
\Big\|_{\ell^{q}_{\rho}(\Z^n)}
\\
&
\le 
\Big\| 
\big\|T^{\period }_{a}\big\|_{\pqr}  
\big\| F_{k_1, \rho, \alpha} \big\|_{L^{p_1} (Q)}
\big\|G_{k_2, \rho, \beta }\big\|_{L^{p_2} (Q)}
\Big\|_{\ell^{q}_{\rho}(\Z^n)}
\\
&
\le 
\Big\| 
\big\|T^{\period }_{a}\big\|_{\pqr}  
\big\| F_{k_1, \rho, \alpha} \big\|_{L^{p_1} (Q)}
\big\|G_{k_2, \rho, \beta }\big\|_{L^{p_2} (Q)}
\Big\|_{\ell^{s}_{\rho}(\Z^n)}
\\
&
\le 
\big\|T^{\period }_{a}\big\|_{\pqr} 
\Big\| 
\big\| F_{k_1, \rho, \alpha} \big\|_{L^{p_1} (Q)} 
\Big\|_{\ell^{q_1}_{\rho}(\Z^n)}
\Big\| 
\big\|G_{k_2, \rho, \beta}\big\|_{L^{p_2} (Q)} 
\Big\|_{\ell^{q_2}_{\rho}(\Z^n)}. 
\end{align*}
Hence 
\begin{align*}
(\ast\ast\ast)
&
\le 
\bigg\{ 
\sum_{\alpha, \beta} 
\bigg(
\frac{(2\pi)^{|\alpha|} }{\alpha !} \frac{(2\pi)^{|\beta|} } {\beta !}\, 
\big\|T^{\period }_{a}\big\|_{\pqr}  
\\
&
\quad \times 
\Big\| 
\big\| F_{k_1, \rho, \alpha} \big\|_{L^{p_1} (Q)} 
\Big\|_{\ell^{q_1}_{\rho}(\Z^n)}
\Big\| 
\big\|G_{k_2, \rho, \alpha}\big\|_{L^{p_2} (Q)} 
\Big\|_{\ell^{q_2}_{\rho}(\Z^n)}
\bigg)^{\epsilon}
\bigg\} ^{1/\epsilon}. 
\end{align*}
Thus, if we 
prove the estimates  
\begin{align}
&
\Big\| 
\big\| 
F_{k_1, \rho, \alpha} \big\|_{L^{p_1} (Q)} 
\Big\|_{\ell^{q_1}_{\rho}(\Z^n)}
\le c\,  
\langle \alpha \rangle^N  (1+K)^{|\alpha|}
\|f_1\|_{(L^{p_1}, \ell^{q_1}) }, 
\label{Ff}
\\
&
\Big\| 
\big\|
G_{k_2, \rho, \beta } \big\|_{L^{p_2} (Q)} 
\Big\|_{\ell^{q_2}_{\rho}(\Z^n)}
\le c\,  
\langle \beta \rangle^N  (1+K)^{|\beta|}
\|f_2\|_{(L^{p_2}, \ell^{q_2}) }  
\label{Gg}
\end{align}
with $N$ depending only on 
$n, p_1, p_2, q_1, q_2$,  
then we obtain 
\begin{align*}
&
\| T_{a, \phi_{k_1} \otimes \phi_{k_2}}
(f_1,f_2)
\|_{(L^{p}, \ell^{q})}
\\
&\le 
c\, 
\|T^{\period }_{a}\|_{\pqr} 
 \|f_1\|_{(L^{p_1}, \ell^{q_1})}
\|f_2\|_{(L^{p_2}, \ell^{q_2})}
\\
&\quad 
\times 
\bigg\{ 
\sum_{\alpha, \beta} 
\bigg(
\frac{(2\pi)^{|\alpha|} }{\alpha !} \frac{(2\pi)^{|\beta|} } {\beta !} 
\langle \alpha \rangle^{N}  (1+K)^{|\alpha|}
\langle \beta \rangle^{N}  (1+K)^{|\beta|}
\bigg)^{\epsilon}
\bigg\} ^{1/\epsilon}
\\
&
= 
c\, 
\|T^{\period }_{a}\|_{\pqr} 
 \|f_1\|_{(L^{p_1}, \ell^{q_1})}
\|f_2\|_{(L^{p_2}, \ell^{q_2})}, 
\end{align*}
which is the desired estimate \eqref{TaPhik1k2lePa}.

Thus our task is to prove \eqref{Ff} and \eqref{Gg}. 
By symmetry, it is sufficient to prove one of them. 
We shall prove \eqref{Ff}. 
Here 
we use the Poisson summation formula 
\[
\sum_{\mu\in \Z^n} 
e^{2\pi i \mu \cdot x}
\widehat{f_1}(\xi_1+ \mu)
=
\sum_{\nu \in \Z^n} 
e^{-2\pi i \xi_1 \cdot ( x+ \nu ) }
f_1 (x+ \nu )  
\]
(for this formula, 
see for example 
\cite[Chapter VII, Section 2]{stein-weiss} 
or 
\cite[Section 3.2.3]{grafakos 2014c}). 
Using this formula, 
we can write $F_{k_1, \rho, \alpha}(x) $ 
as 
\begin{align*}
F_{k_1, \rho, \alpha}(x)
&=
\sum_{\mu\in \Z^n} 
(F_{k_1, \rho, \alpha})^{\wedge} (\mu) 
e^{2\pi i \mu \cdot x}
\\
&
=
\sum_{\mu\in \Z^n} 
e^{2\pi i \mu \cdot x}
\int_{\R^n} 
e^{2\pi i \rho \cdot \xi_1} 
\phi_{k_1} (\xi_1)\, \xi_1^{\alpha}\,  
\widehat{f_1}(\xi_1+ \mu)\, d\xi_1
\\
&
=
\sum_{\nu \in \Z^n} 
\int_{\R^n} 
e^{2\pi i \rho \cdot \xi_1} 
\phi_{k_1} (\xi_1)
\, \xi_1^{\alpha}\,  
e^{-2\pi i \xi_1 \cdot (x+ \nu ) }
f_1 (x+ \nu )
\, d \xi_1
\\
&
=
\sum_{\nu \in \Z^n} 
\calF^{-1}\big(
\phi_{k_1} (\xi_1) \, \xi_1^{\alpha} 
\big) (\rho - x- \nu )
f_1 (x+ \nu ). 
\end{align*}
For $x \in Q$ and 
for each $N\in \N$, 
we have 
\begin{align*}
&
\big| \calF^{-1}\big(
\phi_{k_1} (\xi_1) \xi_1^{\alpha} 
\big) (\rho - x- \nu )
\big|
=
\big| \calF^{-1}\big(
e^{2\pi i K^{-1}k_1 \cdot \xi_1 }
\phi (\xi_1) \xi_1^{\alpha} 
\big) (\rho - x- \nu )
\big|
\\
&=
\big| 
\calF^{-1}\big(
\phi (\xi_1) \xi_1^{\alpha} 
\big) (K^{-1}k_1+\rho - x- \nu )
\big|
\\
&
\le c_{n, N}\, 
\sup_{|\gamma|\le N} 
\big\|\partial_{\xi_1}^{\gamma} 
\big(\phi (\xi_1) \xi_1^{\alpha} \big)
\big\|_{L^1_{\xi_1}}
\langle K^{-1}k_1 +\rho - x-\nu \rangle^{-N}
\\
&
\le c_{n, N, \phi}\,  
\langle \alpha \rangle^{N} (1+K)^{|\alpha|}
\langle K^{-1}k_1 + \rho - \nu \rangle^{-N}
\end{align*}
and hence 
\begin{align*}
|F_{k_1, \rho, \alpha}(x)|
&\le 
c_{n, N, \phi}\,  
\langle \alpha \rangle^{N} (1+K)^{|\alpha|} 
\sum_{\nu\in \Z^n } 
\langle  K^{-1}k_1+ \rho- \nu \rangle^{-N}
| f_1 (x+ \nu ) | 
\\
&
=
c_{n, N, \phi}\,  
\langle \alpha \rangle^{N} (1+K)^{|\alpha|} 
\sum_{\sigma \in \Z^n} 
\langle  K^{-1}k_1+\sigma \rangle^{-N}
| f_1 (x+ \rho - \sigma )|. 
\end{align*}
Set 
$\epsilon_1=\min \{1, p_1, q_1\}$ 
and choose $N\in \N$ so that 
$\epsilon_1 N > n$. 
Then 
\begin{align*}
&
\Big\|
\big\|
F_{k_1, \rho, \alpha}(x)\big\|_{L^{p_1}_x (Q)} 
\Big\|_{\ell^{q_1}_{\rho}(\Z^n)}
\\
&
\le 
c\,  
\langle \alpha \rangle^{N} (1+K)^{|\alpha|} \Bigg\| 
\bigg\| 
\sum_{\sigma\in \Z^n} 
\langle K^{-1}k_1 +\sigma \rangle^{-N}
| f_1 (x +  \rho - \sigma) | 
\bigg\|_{L^{p_1}_x (Q)} 
\Bigg\|_{\ell^{q_1}_{\rho}(\Z^n)}
\\
&
\le 
c\,  
\langle \alpha \rangle^{N} (1+K)^{|\alpha|} 
\\
&\quad 
\times 
\bigg( 
\sum_{\sigma\in \Z^n} 
\langle K^{-1}k_1 +\sigma \rangle^{- \epsilon_1 N}
\Big\| 
\big\|
f_1 (x + \rho - \sigma)
\big\|_{L^{p_1}_x (Q)} 
\Big\|_{\ell^{q_1}_{\rho}(\Z^n)} ^{\epsilon_1}
\bigg)^{1/\epsilon_1}
\\
&
= 
c\,  
\langle \alpha \rangle^{N} (1+K)^{|\alpha|} 
\bigg( \sum_{\sigma\in \Z^n} 
\langle K^{-1}k_1 + \sigma \rangle^{-\epsilon_1 N}
\bigg)^{1/\epsilon_1 }
\|f_1\|_{(L^{p_1}, \ell^{q_1}) }
\\
&
\le   
c\,  
\langle \alpha \rangle^{N} (1+K)^{|\alpha|} 
\|f_1\|_{(L^{p_1}, \ell^{q_1}) (\R^n)},  
\end{align*}
which implies \eqref{Ff}. 
Now 
the latter inequality of Theorem 
\ref{th-amalgam-main} is proved. 
\end{proof}

Next, we prove 
the former inequality of Theorem \ref{th-amalgam-main}.

\begin{proof}[{Proof of the former inequality of 
Theorem \ref{th-amalgam-main}}] 
Here we shall prove the inequality 
\begin{equation}\label{eq-amalgam-former} 
\|T^{\period }_{a}\|_{\pqr} 
\le  c \|T_{a,\Phi}\|_{\pqramalgam}. 
\end{equation}

From the assumption that 
$\Phi $ satisfies the  condition (B), 
there exist a point 
$\boldsymbol{\xi}^{0}
\in \R^{2n}$ 
and a sufficiently small $\epsilon>0$ such that 
\begin{align}
&
\Phi (\boldsymbol{\xi}^{0}) \neq 0, 
\label{Phineq0}
\\
&
|\boldsymbol{\xi}- \boldsymbol{\xi}^{0}|< 2 \epsilon, 
\; 
\boldsymbol{\mu}  \in \Z^{2n}, 
\; 
\boldsymbol{\mu}  \neq 0   
\; 
\Rightarrow 
\; 
\Phi (\boldsymbol{\xi} - \boldsymbol{\mu}) = 0. 
\label{Phisupp}
\end{align}
We write $\boldsymbol{\xi}^{0}=(\xi^0_1, \xi^0_2)$. 
We then take 
functions 
$\theta_1, \theta_2\in C_{0}^{\infty}(\R^n)$ such that 
\begin{align}
&
\supp \theta_j \subset 
\{  \xi \in \R^n 
\mid 
|\xi - \xi^{0}_{j}| < \epsilon \}, 
\label{thetajsupp}
\\
&
\bigg|
\iint_{\R^n \times \R^n} 
e^{2\pi i x\cdot (\xi_1+\xi_2)}
\Phi (\xi_1, \xi_2) 
\theta_1 (\xi_1)
\theta_2 (\xi_2)
\, d\xi_1 d\xi_2
\bigg|
\ge 1
\;\;\text{for all}\;\; x\in Q.  
\label{thetaPhi}
\end{align}
Hereafter we write 
\begin{equation}\label{eq-def-g}
g(x)=\iint_{\R^n \times \R^n} 
e^{2\pi i x\cdot (\xi_1+\xi_2)}
\Phi (\xi_1, \xi_2) 
\theta_1 (\xi_1)
\theta_2 (\xi_2)
\, d\xi_1 d\xi_2. 
\end{equation}

Take arbitrary $F_1, F_2 \in C^{\infty}(\T^n)$. 
We define 
$f_1, f_2  \in \calS (\R^n)$ so that 
their Fourier transforms are given by  
\begin{equation*}
\widehat{f_j}(\xi) = 
\sum_{\nu \in \Z^n} 
\widehat{F_j}(\nu ) 
\theta_j (\xi - \nu ),
\quad \xi \in \R^n, 
\quad 
j=1,2, 
\end{equation*}
or equivalently by 
\begin{equation*}
f_j (x) =  \sum_{\nu \in \Z^n} 
\widehat{F_j}(\nu ) 
e^{2\pi i \nu \cdot x}
(\calF^{-1}\theta_j) (x)
=
F_j (x) (\calF^{-1}\theta_j) (x), 
\quad j=1,2  
\end{equation*}
(recall that $\widehat{F_j}$ denotes the Fourier coefficient 
of $F_j$).  
Then, since $F_j$ is a periodic function 
and $\calF^{-1}\theta_j$ is a function in $\calS$, 
we have 
\begin{equation}\label{fF}
\|f_j\|_{(L^{p_j}, \ell^{q_j}) (\R^n)} \le c 
\|F_j\|_{L^{p_j}(\T^n)}, 
\quad 
j=1,2. 
\end{equation}

On the other hand, 
from 
\eqref{Phisupp} and \eqref{thetajsupp}, 
we have 
\begin{align*}
&
\sigma_{a, \Phi} (\xi_1, \xi_2) \widehat{f_1}(\xi_1) 
\widehat{f_2}(\xi_2) 
\\
&
=
\bigg(\sum_{\mu_1, \mu_2 \in \Z^n} 
a(\mu_1, \mu_2) \Phi (\xi_1 - \mu_1, \xi_2 - \mu_2)
\bigg)
\\
&
\quad \times 
\bigg( 
\sum_{\nu_1 \in \Z^n} 
\widehat{F_1}(\nu_1) 
\theta_1 (\xi_1 - \nu_1)
\bigg)
\bigg( 
\sum_{\nu_2 \in \Z^n} 
\widehat{F_2}(\nu_2) 
\theta_2 (\xi_2 - \nu_2) 
\bigg)
\\
&
=
\sum_{\mu_1, \mu_2 \in \Z^n} 
a(\mu_1, \mu_2) 
\widehat{F_1}(\mu_1) 
\widehat{F_2}(\mu_2) 
\Phi  (\xi_1 - \mu_1, \xi_2 - \mu_2 ) 
\theta_1 (\xi_1 - \mu_1 ) \theta_2 (\xi_2 - \mu_2)
\end{align*}
and thus 
\begin{align}
T_{a,\Phi} (f_1,f_2)(x)
&=
\sum_{\mu_1, \mu_2 \in \Z^n} 
a(\mu_1, \mu_2) 
\widehat{F_1}(\mu_1) 
\widehat{F_2}(\mu_2) 
\iint_{\R^n \times \R^n} 
e^{2\pi i x \cdot (\xi_1 + \xi_2)}
\nonumber 
\\
&
\quad \times 
\Phi  (\xi_1 - \mu_1, \xi_2 - \mu_2 ) 
\theta_1 (\xi_1 - \mu_1) \theta_2 (\xi_2 - \mu_2) \, d\xi_1 d\xi_2 
\nonumber 
\\
&
=
\sum_{\mu_1, \mu_2 \in \Z^n} 
a(\mu_1, \mu_2) 
\widehat{F_1}(\mu_1) 
\widehat{F_2}(\mu_2) 
e^{2\pi i x \cdot (\mu_1 + \mu_2)}
g(x) 
\label{eq-iii}
\\
&
=
T^{\period }_{a}(F_1,F_2)(x) 
g(x). 
\nonumber 
\end{align}
From this and 
\eqref{thetaPhi}-\eqref{eq-def-g}, we have 
\begin{equation}\label{TgeTperiod}
|T_{a,\Phi} (f_1,f_2)(x)| 
\ge  
|T^{\period }_{a}(F_1,F_2)(x) |, 
\quad 
x \in Q.   
\end{equation}

Now from 
\eqref{TgeTperiod} and \eqref{fF}, 
we obtain 
\begin{align*}
&
\|T^{\period }_{a}(F_1,F_2) \|_{L^{p} (Q)}
\le  
\|T_{a,\Phi}(f_1,f_2)\|_{L^{p} (Q)}
\le 
\|T_{a,\Phi}(f_1,f_2)\|_{(L^{p}, \ell^{q}) (\R^n)}
\\
&
\le 
\|T_{a,\Phi}\|_{\pqramalgam} 
\|f_1\|_{(L^{p_1}, \ell^{q_1})(\R^n)}
\|f_2\|_{(L^{p_2}, \ell^{q_2})(\R^n)}
\\
&
\le c   
\|T_{a,\Phi}\|_{\pqramalgam} 
\|F_1\|_{L^{p_1}(\T^n)}
\|F_2\|_{L^{p_2}(\T^n)},  
\end{align*}
which implies 
\eqref{eq-amalgam-former}. 
Now the former inequality of 
Theorem \ref{th-amalgam-main} is proved 
and proof of Theorem \ref{th-amalgam-main} is complete. 
\end{proof}


\section{The case of Wiener amalgam spaces}
\label{Wiener-amalgam}

In this section, we shall give our 
second main theorem, which concerns 
the operator norm of $T_{a, \Phi}$ in Wiener amalgam spaces.

We begin with the definition of Wiener amalgam spaces. 
Let $\kappa\in C_{0}^{\infty}(\R^n)$ be a function satisfying 
\begin{equation*}
\bigg| \sum _{k\in \mathbb{Z}^{n}} \kappa(\xi-k) \bigg| 
\geq 1
\quad \text{for all} \quad
\xi \in \R^n .
\end{equation*}
Then  for $p,q \in (0, \infty]$,
the {\it Wiener amalgam space} $W^{p,q}=W^{p,q}(\R^n)$
is defined to be the set of
all $f \in \calS'(\R^n)$ such that
\begin{equation*}
\|f\|_{W^{p,q}} = 
\Big\| \big\|
\kappa(D-k)f(x)
\big\|_{\ell^{q}_{k}(\Z^n)} 
\Big\|_{L^p_{x}(\R^n)}
< \infty.
\end{equation*}
It is known that the definition of Wiener amalgam space 
does not depend on the choice of the function $\kappa$ 
up to the equivalence of quasi-norm. 
It is also known that 
the embedding 
$W^{p_1, q_1} \hookrightarrow W^{p_2, q_2} $ 
holds if $0<p_1 \le p_2\le \infty$ 
and $0<q_1 \le q_2\le \infty$. 
For these facts, see Feichtinger \cite{Fei-1981, Fei-1983}, 
and 
Triebel \cite{Tri-1983}.

%
We 
write $X(\Z^n)$ to denote the set of all functions  
$b: \Z^n \to \C$ such that $b(\mu)=0$ except for 
a finite number of $\mu \in \Z^n$. 
For $a\in \ell^{\infty}(\Z^n \times \Z^n)$, 
we define the bilinear 
map $S_a: X(\Z^n)\times X(\Z^n) \to X(\Z^n)$ by 
\begin{align*}
&
S_a(b_1, b_2)(\mu)
=
\sum_{\mu_1+\mu_2=\mu} 
a(\mu_1, \mu_2)
b_1 (\mu_1)
b_2 (\mu_2), 
\\
&
\mu \in \Z^n, 
\quad 
b_1, b_2 \in X(\Z^n). 
\end{align*}
For $q_1, q_2, q\in (0, \infty]$, 
we define 
\[
\|S_a\|_{\ell^{q_1}\times \ell^{q_2}\to \ell^q}
=
\sup \left\{ 
\frac{
\|S_a(b_1, b_2)\|_{\ell^q(\Z^n)}}
{\|b_1\|_{\ell^{q_1}(\Z^n)}
\|b_2\|_{\ell^{q_2}(\Z^n)}}
\; \bigg| \; 
b_1, b_2 \in X(\Z^n)\setminus \{0\}
\right\}. 
\]

The following is the second main theorem of this article.

\begin{thmx}\label{th-Wiener-amalgam-main}
Let $\Phi \in C_{0}^{\infty}(\R^n \times \R^n)$ satisfy 
the condition (B) and let 
$p_1, p_2, p$, $q_1, q_2, q\in (0, \infty]$ 
satisfy  
$1/p_1 + 1/p_2 \ge 1/p$. 
Then 
there exists a constant 
$c\in (0, \infty)$ 
depending only on 
$n, p_1, p_2, p, q_1, q_2, q$, and $\Phi$, 
such that 
\[
c^{-1} 
\|S_a\|_{\ell^{q_1}\times \ell^{q_2}\to \ell^q}
\le  
\left\|
T_{a, \Phi }
\right\|_{W^{p_1, q_1} \times W^{p_2, q_2} \to W^{p,q} }
\le 
c \|S_a\|_{\ell^{q_1}\times \ell^{q_2}\to \ell^q} 
\]
for all $a \in \ell^{\infty}(\Z^n \times \Z^n)$. 
\end{thmx}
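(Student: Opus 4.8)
To prove Theorem~\ref{th-Wiener-amalgam-main} the plan is to mirror the two–sided argument already used for Theorem~\ref{th-amalgam-main}, with the periodic operator $T^{\period}_{a}$ replaced by the discrete operator $S_{a}$, and with the \emph{frequency}-side structure of the Wiener amalgam spaces playing the role previously played by the \emph{physical}-side structure of the amalgam spaces. The one tool I would isolate first, as a separate lemma, is the following frequency-decoupling fact: if $h=\sum_{\mu\in\Z^n}h_{\mu}$ with $\supp\widehat{h_{\mu}}\subset \mu+E$ for a fixed bounded set $E$ whose $\Z^n$-translates have bounded overlap, then for every $r\in(0,\infty]$
\[
\|h\|_{W^{r,q}}\le c\,\Big\|\big\| h_{\mu}(x)\big\|_{\ell^{q}_{\mu}(\Z^n)}\Big\|_{L^{r}_{x}(\R^n)},
\]
and if the supports $\mu+E$ are pairwise disjoint the reverse inequality holds as well. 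This is the standard identification of the Wiener amalgam quasi-norm of a frequency-separated sum with a mixed $L^{r}(\ell^{q})$ quasi-norm; in the range $\min(r,q)<1$ it rests on band-limited (Peetre) maximal function estimates.

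For the upper bound $\|T_{a,\Phi}\|_{W^{p_1,q_1}\times W^{p_2,q_2}\to W^{p,q}}\le c\,\|S_{a}\|_{\ell^{q_1}\times\ell^{q_2}\to\ell^{q}}$ (which, as in Remark~\ref{remark-amalgam}(2), should not require condition~(B)) I would first run the Coifman--Meyer reduction exactly as in \eqref{eq-Phi-otimes}--\eqref{TaPhik1k2lePa}, writing $\Phi=\sum_{k_1,k_2}b(k_1,k_2)\,\phi_{k_1}\otimes\phi_{k_2}$ with $\{b(k_1,k_2)\}$ rapidly decreasing, so that it suffices to bound $\|T_{a,\phi_{k_1}\otimes\phi_{k_2}}\|$ by $c\,\|S_{a}\|$ \emph{uniformly in} $k_1,k_2$. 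For the product multiplier one has the pointwise identity
\[
T_{a,\phi_{k_1}\otimes\phi_{k_2}}(f_1,f_2)(x)=\sum_{\mu\in\Z^n}w_{\mu}(x),\qquad w_{\mu}(x)=\sum_{\mu_1+\mu_2=\mu}a(\mu_1,\mu_2)\,u_{\mu_1}(x)\,v_{\mu_2}(x),
\]
where $u_{\mu_1}=\phi_{k_1}(D-\mu_1)f_1$ and $v_{\mu_2}=\phi_{k_2}(D-\mu_2)f_2$ have frequency supports in $\mu_1+KQ$ and $\mu_2+KQ$, so $\supp\widehat{w_{\mu}}\subset\mu+2KQ$ with overlap controlled by $K$ alone. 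Applying the decoupling lemma with $r$ defined by $1/r=1/p_1+1/p_2$, then noting that for each fixed $x$ one has $\|w_{\mu}(x)\|_{\ell^{q}_{\mu}}=\|S_{a}(u_{\bullet}(x),v_{\bullet}(x))\|_{\ell^{q}}\le\|S_{a}\|\,\|u_{\bullet}(x)\|_{\ell^{q_1}}\|v_{\bullet}(x)\|_{\ell^{q_2}}$ (extending $\|S_a\|$ from $X(\Z^n)$ to general $\ell^{q_i}$ sequences by a routine density argument), and finally using H\"older in $x$ with $1/r=1/p_1+1/p_2$, I reduce matters to $\big\|\,\|u_{\bullet}(x)\|_{\ell^{q_1}}\big\|_{L^{p_1}}\le c\,\|f_1\|_{W^{p_1,q_1}}$ and its analogue for $f_2$. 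The modulation $\phi_{k_j}(\xi)=e^{2\pi iK^{-1}\xi\cdot k_j}\phi(\xi)$ merely translates each $u_{\mu_1}$ in the space variable by $K^{-1}k_1$, so these two estimates reduce, after a translation, to the trivial change-of-window bound for $W^{p_j,q_j}$ with the fixed window $\phi$ and are in particular independent of $k_1,k_2$. This gives $\|T_{a,\phi_{k_1}\otimes\phi_{k_2}}(f_1,f_2)\|_{W^{r,q}}\le c\,\|S_{a}\|\,\|f_1\|_{W^{p_1,q_1}}\|f_2\|_{W^{p_2,q_2}}$, and the embedding $W^{r,q}\hookrightarrow W^{p,q}$ (valid since $r\le p$) upgrades $W^{r,q}$ to $W^{p,q}$; this is exactly where the hypothesis $1/p_1+1/p_2\ge1/p$ enters, playing the role that $\ell^{s}\hookrightarrow\ell^{q}$ played in the amalgam case.

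For the lower bound $\|S_{a}\|\le c\,\|T_{a,\Phi}\|$ I would adapt the construction from the proof of the former inequality of Theorem~\ref{th-amalgam-main}. Using condition~(B), choose $\boldsymbol{\xi}^{0}=(\xi^{0}_1,\xi^{0}_2)$, a small $\epsilon>0$, and bumps $\theta_1,\theta_2$ supported in the $\epsilon$-balls about $\xi^{0}_1,\xi^{0}_2$ as in \eqref{Phineq0}--\eqref{thetaPhi}. Given finitely supported $b_1,b_2\in X(\Z^n)$, set $\widehat{f_j}(\xi)=\sum_{\nu\in\Z^n}b_j(\nu)\,\theta_j(\xi-\nu)$; since the bumps are pairwise disjoint, the disjoint-support direction of the decoupling lemma gives $\|f_j\|_{W^{p_j,q_j}}\le c\,\|b_j\|_{\ell^{q_j}}$. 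Exactly as in \eqref{eq-iii}, condition~(B) forces all off-diagonal terms to vanish and yields the factorisation
\[
T_{a,\Phi}(f_1,f_2)(x)=g(x)\sum_{\mu\in\Z^n}S_{a}(b_1,b_2)(\mu)\,e^{2\pi ix\cdot\mu},
\]
with $g$ the fixed nonzero Schwartz function from \eqref{eq-def-g}. The frequency pieces of the right-hand side are again disjointly supported, so the reverse direction of the decoupling lemma yields $\|T_{a,\Phi}(f_1,f_2)\|_{W^{p,q}}\ge c^{-1}\|S_{a}(b_1,b_2)\|_{\ell^{q}}\,\|g\|_{L^{p}}\ge c^{-1}\|S_{a}(b_1,b_2)\|_{\ell^{q}}$. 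Combining the three displayed facts and taking the supremum over $b_1,b_2\in X(\Z^n)\setminus\{0\}$ gives the claim.

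I expect the main obstacle to be the decoupling lemma itself in the quasi-Banach range $\min(p,q)<1$: both the bounded-overlap upper bound and the disjoint-support lower bound require replacing simple Young/convolution estimates by band-limited maximal function arguments, and care is needed to keep every constant independent of the Coifman--Meyer indices $k_1,k_2$. This uniformity is the one genuinely new point compared with the linear transference theory, and it is resolved cleanly by the observation that the frequency modulation built into $\phi_{k_j}$ is, at the level of the relevant quasi-norms, nothing but a norm-preserving spatial translation.
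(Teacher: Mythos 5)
Your proposal is correct and follows essentially the same route as the paper: the Coifman--Meyer tensor decomposition with uniformity in $k_1,k_2$ obtained by viewing the modulation in $\phi_{k_j}$ as a norm-preserving spatial translation, a frequency-localization lemma reducing the $W^{p,q}$ quasi-norm of the frequency-separated sum to a mixed $L^p(\ell^q)$ quantity, a pointwise application of $\|S_a\|$ followed by H\"older, the embedding in the Wiener-amalgam scale to absorb $1/p_1+1/p_2\ge 1/p$, and, for the lower bound, the condition-(B) construction with a window $\kappa$ adapted to the supports of $\theta_1,\theta_2$. The only point worth flagging is that the decoupling step you single out as the main obstacle does not in fact require Peetre maximal function machinery in the quasi-Banach range: the paper's Lemma~\ref{varphigmu} derives it from the elementary Nikolskii-type inequality \eqref{bbb} for band-limited functions combined with the mixed-norm interchange \eqref{eqmixednorm}.
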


Before we give the proof of this theorem, 
we give some remarks.

\begin{remx}\label{remark-Wiener-amalgam}  
(1) The latter inequality 
\[ 
\left\|
T_{a, \Phi }
\right\|_{W^{p_1, q_1} \times W^{p_2, q_2} \to W^{p,q} }
\le 
c \|S_a\|_{\ell^{q_1}\times \ell^{q_2}\to \ell^q} 
\] 
in the conclusion of Theorem \ref{th-Wiener-amalgam-main} 
holds for all $\Phi \in C_{0}^{\infty}(\R^n)$, 
without the condition (B). 
This will be seen from the proof to be given below. 

(2) The assumption 
$1/p_1 + 1/p_2 \ge 1/p$ in Theorem \ref{th-Wiener-amalgam-main} 
gives no essential restriction. 
$T_{\sigma}$ with a nontrivial 
$\sigma \in L^{\infty}(\R^n \times \R^n)$ 
has a finite operator norm in 
$W^{p_1, q_1} \times W^{p_2, q_2} \to W^{p,q} $ 
only if $1/p_1 + 1/p_2 \ge 1/p$. 
For a proof of this fact, see 
Lemma \ref{lem-Holder-WA} in Appendix.  \end{remx}

In the proof of the latter inequality of 
Theorem \ref{th-Wiener-amalgam-main}, 
we use the following lemma.

\begin{lmmx}\label{varphigmu}
Let 
$\varphi \in C_{0}^{\infty}(\R^n)$ and let 
$g_{\mu}\in \calS^{\prime}(\R^n)$, $\mu \in \Z^n$. 
Suppose 
the Fourier transform of each $g_{\mu}$ has 
a compact support 
and suppose 
there exists a number $K\in (0, \infty)$ 
such that 
$\diam (\supp \varphi) \le K$ 
and 
$\diam (\supp \widehat{g_{\mu}}) \le K$ 
for all $\mu \in \Z^n$. 
Then 
for each 
$p, q\in (0, \infty]$ 
there exists a constant 
$c$ depending only on 
$n, p, q, K$, and $\varphi$ 
such that  
\begin{equation*} 
\Big\| 
\big\| 
\varphi (D - \mu) g_{\mu}(x) \big\|_{\ell^q_{\mu}(\Z^n)} 
\Big\|_{L^p_{x}(\R^n)}
\le 
c
\Big\| 
\big\|  g_{\mu}(x) \big\|_{\ell^q_{\mu}(\Z^n)} 
\Big\|_{L^p_x(\R^n)}.  
\end{equation*}
\end{lmmx}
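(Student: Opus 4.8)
The plan is to prove the lemma by exploiting the compact-support hypotheses to replace each $g_\mu$ by a convolution against a single well-behaved kernel, and then to dominate the whole expression by a maximal-function-type bound that is uniform in $\mu$. The key observation is that, because $\diam(\supp\widehat{g_\mu}) \le K$, one may insert an auxiliary cutoff. Concretely, I would fix a function $\Psi \in C_0^\infty(\R^n)$ with $\Psi \equiv 1$ on the ball of radius $K$ centered at the origin, so that for each $\mu$ one has $\widehat{g_\mu}(\xi) = \Psi(\xi - \eta_\mu)\,\widehat{g_\mu}(\xi)$ where $\eta_\mu$ is any point in (or near) $\supp\widehat{g_\mu}$. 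This lets me write
\begin{equation*}
\varphi(D-\mu)g_\mu = \varphi(D-\mu)\,\Psi(D-\eta_\mu)\,g_\mu = \bigl[\varphi(\cdot-\mu)\Psi(\cdot-\eta_\mu)\bigr](D)\,g_\mu,
\end{equation*}
and the multiplier $\varphi(\cdot-\mu)\Psi(\cdot-\eta_\mu)$ is a translate of a fixed-type bump whose inverse Fourier transform is a Schwartz function with uniform decay.

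Next I would pass to the kernel side. Writing $\varphi(D-\mu)g_\mu(x) = (\calF^{-1}[\varphi(\cdot-\mu)] * g_\mu)(x)$, and using that $\calF^{-1}[\varphi(\cdot-\mu)](y) = e^{2\pi i \mu\cdot y}\,\calF^{-1}\varphi(y)$, the modulation factor has modulus one and drops out of the pointwise bound. Since $\varphi$ is compactly supported, $\calF^{-1}\varphi$ is Schwartz, so $|\calF^{-1}[\varphi(\cdot-\mu)](y)| \le c_N \langle y\rangle^{-N}$ uniformly in $\mu$. The spectral localization of $g_\mu$ (via $\Psi$) then lets me control $|g_\mu(x-y)|$ against $|g_\mu(x)|$ smeared over unit scales; more precisely, I would invoke the standard fact that a function with $\diam(\supp\widehat{g_\mu})\le K$ satisfies a pointwise estimate of the form $|g_\mu(x-y)| \le c\,\langle y\rangle^{n/r}\, (M_r g_\mu)(x)$, or equivalently that convolution with the Schwartz kernel is dominated by the Hardy–Littlewood maximal operator $M$ (or its $L^r$-variant $M_r$ for small $r \le \min\{1,p,q\}$). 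This yields the pointwise estimate
\begin{equation*}
\bigl|\varphi(D-\mu)g_\mu(x)\bigr| \le c\,(M_r g_\mu)(x),
\quad \mu \in \Z^n,
\end{equation*}
with $c$ uniform in $\mu$.

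From here the conclusion follows by vector-valued maximal inequalities. Taking the $\ell^q_\mu$ norm and then the $L^p_x$ norm of the pointwise bound, I would apply the Fefferman–Stein vector-valued maximal inequality for $M_r$ in the range $r < \min\{1,p,q\}$, which gives
\begin{equation*}
\Bigl\|\,\bigl\|M_r g_\mu(x)\bigr\|_{\ell^q_\mu}\,\Bigr\|_{L^p_x}
\le c\,\Bigl\|\,\bigl\|g_\mu(x)\bigr\|_{\ell^q_\mu}\,\Bigr\|_{L^p_x},
\end{equation*}
which is exactly the desired estimate. The main obstacle is the rigorous justification of the pointwise domination $|\varphi(D-\mu)g_\mu| \le c\, M_r g_\mu$ with a constant independent of $\mu$: the modulation by $\mu$ must be shown to be harmless (which it is, since it only contributes a unimodular factor to the kernel), and the spectral-support hypothesis must be converted into the sub-mean-value / maximal-function bound at the correct $L^r$ exponent so that Fefferman–Stein applies for \emph{all} $p,q \in (0,\infty]$, including the quasi-Banach range $p,q < 1$. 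Handling that full range forces the use of $M_r$ rather than the ordinary $M$, and care is needed at the endpoints $p=\infty$ or $q=\infty$, where the maximal inequality degenerates and one argues directly from the uniform kernel decay instead.
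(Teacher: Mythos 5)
Your route is genuinely different from the paper's and is the standard Triebel--Lizorkin-style argument: recenter the spectrum of $g_\mu$ by a modulation (harmless, since it only changes $g_\mu$ by a unimodular factor), invoke the Peetre maximal inequality $|g_\mu(x-y)|\le c\,(1+K|y|)^{n/r}(M_rg_\mu)(x)$ for band-limited functions, convolve against the rapidly decaying kernel $|\calF^{-1}\varphi|$ to get $|\varphi(D-\mu)g_\mu|\lesssim M_rg_\mu$ uniformly in $\mu$, and finish with Fefferman--Stein for $M_r$ with $r<\min\{1,p,q\}$. (The auxiliary cutoff $\Psi$ is superfluous for the kernel bound, since $\varphi$ is already compactly supported; the spectral hypothesis on $g_\mu$ is needed only in the Peetre step.) The paper instead avoids maximal functions entirely: it writes $\varphi(D-\mu)g_\mu(x)=\int e^{2\pi i\mu\cdot y}(\calF^{-1}\varphi)(y)g_\mu(x-y)\,dy$, observes that for fixed $x$ the integrand (as a function of $y$) is band-limited with spectrum of diameter at most $2K$, applies the Nikolskii inequality to bound the $L^1_y$ integral by the $L^\epsilon_y$ quasi-norm with $\epsilon\le\min\{1,p,q\}$, and then interchanges the $L^\epsilon_y$, $\ell^q_\mu$, and $L^p_x$ norms using only the elementary Minkowski-type inequality $\|\|\cdot\|_{L^p}\|_{L^q}\le\|\|\cdot\|_{L^q}\|_{L^p}$ for $p\le q$. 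This elementary scheme treats all exponents $p,q\in(0,\infty]$ uniformly.

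The concrete gap in your version is the case $p=\infty$ with $q<\infty$: the Fefferman--Stein inequality fails at $p=\infty$ (already in its scalar-valued $\ell^{q/r}$ form), so your main estimate does not cover it, and your proposed fallback of ``arguing directly from the uniform kernel decay'' is not enough when $q<1$. In that range Minkowski's integral inequality is unavailable for the $\ell^q_\mu$ quasi-norm, so mere integrability of the kernel does not let you pull the $y$-integral outside; you are forced back to the Nikolskii bound $\int|K(y)g_\mu(x-y)|\,dy\lesssim\|K(y)g_\mu(x-y)\|_{L^q_y}$ followed by an interchange of the two $q$-quasi-norms --- which is exactly the paper's mechanism. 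So either you carry out that endpoint case separately (essentially reproducing the paper's proof there), or you note that the elementary argument in fact handles all exponents at once, making the maximal-function machinery unnecessary. The cases $q=\infty$ (via $\sup_\mu M_rg_\mu\le M_r(\sup_\mu|g_\mu|)$) and $p,q<\infty$ are fine as you describe.
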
 

\begin{proof}
We use the following two well known facts.  
Firstly, 
if the Fourier transform of 
$f\in \calS^{\prime}(\R^n)$ has a compact support 
and if $R$ is a positive real number satisfying 
$\diam (\supp \widehat{f}) \le R$ 
then for each 
$r,s$ satisfying 
$0< r \le s \le \infty$, 
there exists a constant 
$c$ depending only on $r, s$, and $n$ such that 
\begin{equation}\label{bbb}
R^{n/s} \|f\|_{L^s(\R^n)} \le 
c R^{n/r} \|f\|_{L^r(\R^n)}.  
\end{equation}
For a proof of this inequality, see for example 
\cite[Proposition 1.3.2]{Triebel-book 1983}. 
Secondly, 
\begin{equation}\label{eqmixednorm}
\Big\| \| f(x,y) \|_{L^p_x} 
\Big\|_{L^q_y}
\le 
\Big\| \| f(x,y) \|_{L^q_y} 
\Big\|_{L^p_x} 
\quad \text{if} \quad 
0<p\le q \le \infty,  
\end{equation}
which holds for all 
$L^p$ and $L^q$ quasi-norms 
defined on any $\sigma$-finite 
measure spaces. 
The inequality \eqref{eqmixednorm} 
can be easily proved by the use of 
Minkowski's inequality for integrals.

Now let $\varphi$ and $g_{\mu}$ be as in Lemma \ref{varphigmu}.   
We write 
\[
\varphi (D-\mu) g_{\mu} (x) 
=
\int 
e^{2\pi i \mu \cdot y} 
(\calF^{-1}\varphi) (y) 
g_{\mu}(x- y)\, dy. 
\]
From our assumption, 
the Fourier transform 
of the function 
$y \mapsto  
(\calF^{-1}\varphi) (y) 
g_{\mu}(x- y)$ 
has a compact support 
of diameter not exceeding $2K$. 
Thus by \eqref{bbb} 
we have 
\begin{align*}
&
|\varphi (D-\mu) g_{\mu} (x)|
=\bigg| 
\int_{\R^n}
e^{2 \pi i \mu\cdot y} 
(\calF^{-1}\varphi) (y) 
g_{\mu}(x- y)\, dy
\bigg|
\\
&
\le 
\int_{\R^n}
\big|(\calF^{-1}\varphi) (y) 
g_{\mu}(x-y)\big|\, dy
\le 
c_{n, \epsilon, K}   
\big\| 
(\calF^{-1}\varphi) (y) 
g_{\mu}(x-y)
\big\|_{L^{\epsilon}_{y}}
\end{align*}
for any $\epsilon$ satisfying $0<\epsilon \le 1$. 
Taking $\epsilon$ so that 
$\epsilon \le  \min \{1, p,q\}$, 
we use \eqref{eqmixednorm} to obtain 
\begin{align*}
&
\Big\|
\big\| 
\varphi (D-\mu) g_{\mu} (x)\big\|_{\ell^q_{\mu}} 
\Big\|_{L^p_x}
\\
&
\le
c_{n, \epsilon, K}
\Big\|
\big\| 
\| 
(\calF^{-1}\varphi) (y) 
g_{\mu}(x-y)
\|_{L^{\epsilon}_{y}}
\big\|_{\ell^q_{\mu}} 
\Big\|_{L^p_x}
\\
&
\le 
c_{n, \epsilon, K}  
\Big\| 
\big\|
\| 
(\calF^{-1}\varphi) (y) 
g_{\mu}(x-y)
\|_{\ell^q_{\mu}} 
\big\|_{L^p_x}
\Big\|_{L^{\epsilon}_{y}}
=
c_{n, \epsilon, K} 
\big\| \calF^{-1}\varphi \big\|_{L^{\epsilon}}
\Big\|
\big\| 
g_{\mu} (x)\big\|_{\ell^q_{\mu}} 
\Big\|_{L^p_x}. 
\end{align*}
Lemma \ref{varphigmu} is proved. 
\end{proof}

Now we shall prove 
Theorem \ref{th-Wiener-amalgam-main}. 
The proof 
is a modification of the argument 
given in \cite{KMT-arxiv-3}. 
We shall divide the proof  
into two parts,  
proof of the latter inequality 
and proof 
of the former inequality. 
In the proofs, we 
assume 
$a\in \ell^{\infty}(\Z^n \times \Z^n)$. 
For nonnegative quantities $A$ and $B$, 
we write 
$A\lesssim B$ if 
there exists a constant $c$ with the 
same properties as the constant 
$c$ of Theorem \ref{th-Wiener-amalgam-main}. 
Also we write $A \approx B$ to mean that 
$A \lesssim B$ and $B \lesssim A$.

\begin{proof}[{Proof of the latter inequality of 
Theorem \ref{th-Wiener-amalgam-main}}] 
Here we shall prove the inequality 
\begin{equation}\label{eq-Wiener-amalgam-latter}
\left\|
T_{a, \Phi }
\right\|_{W^{p_1, q_1} \times W^{p_2, q_2} \to W^{p,q} }
\le 
c \|S_a\|_{\ell^{q_1}\times \ell^{q_2}\to \ell^q}. 
\end{equation}
Here we don't need the condition (B). 
By virtue of the  embedding 
$W^{\widetilde{p},q}
\hookrightarrow W^{p,q }$, $\widetilde{p}\le p$,  
it is sufficient to show it 
in the case $1/p_1 + 1/p_2 = 1/p$.

Take 
$K$ and $\phi$ in the same way as in 
Proof of the latter inequality of 
Theorem \ref{th-amalgam-main}. 
In the present case, 
we take $\phi$ so that it satisfies 
the additional condition
\[
\sum_{m \in \Z^n}  \phi (\xi - m) 
\ge 1 
\;\; 
\text{for all}\;\; 
\xi \in \R^n.  
\]
Then we have 
\[
\|f\|_{W^{r,s} }
\approx 
\Big\|
\big\| 
\phi (D-\mu) f (x)
\big\|_{\ell^{s}_{\mu}} 
\Big\|_{L^r_x}
\]
for each 
$r,s\in (0, \infty]$.

We use the same representations 
as in 
Proof of the latter inequality of 
Theorem \ref{th-amalgam-main}:  
\begin{align*}
&
\Phi (\xi_1, \xi_2)
=
\sum_{k_1, k_2\in \Z^n}\, 
b(k_1, k_2) \, 
\phi_{k_1}\otimes \phi_{k_2} (\xi_1, \xi_2), 
\\
&
\sigma_{a, \Phi} (\xi_1, \xi_2)
=
\sum_{k_1, k_2\in \Z^n}\, 
b(k_1, k_2) \, 
\sigma_{ a, \phi_{k_1}\otimes \phi_{k_2}} 
(\xi_1, \xi_2)
\end{align*}
(see \eqref{eq-Phi-otimes} and 
\eqref{eq-TaPhi-Taotimes}).  
Recall that 
$\{ b(k_1, k_2)\} $ is a rapidly decreasing 
sequence. 
Hence in order to prove \eqref{eq-Wiener-amalgam-latter} 
it is sufficient to prove the estimate 
\begin{equation}\label{eq-Wiener-amalgam-latter-goal}
\big\| 
T_{a, \phi_{k_1}\otimes 
\phi_{k_2}}
\big\|_{W^{p_1, q_1}\times W^{p_2, q_2}\to W^{p, q}}
\le c 
\big\| 
S_a \big\|_{\ell^{q_1}\times \ell^{q_2}\to \ell^{q}} 
\end{equation}
(with $c$ 
independent of  
$k_1, k_2$).

Let 
$f_1, f_2 \in \calS (\R^n)$. 
We have 
\begin{align}
T_{  a, \phi_{k_1} \otimes \phi_{k_2} }
(f_1,f_2)(x)
&=
\sum_{\mu_1, \mu_2\in \Z^n}\,  
\iint_{\R^n\times \R^n} 
a(\mu_1, \mu_2) 
e^{2\pi i x \cdot (\xi_1 + \xi_2)} 
\nonumber 
\\
&
\quad \quad 
\times 
\phi_{k_1}(\xi_1 - \mu_1) 
\phi_{k_2}(\xi_2 - \mu_2) 
\widehat{f_1}(\xi_1)
\widehat{f_2}(\xi_2)\, 
d\xi_1 d\xi_2
\nonumber 
\\
&
=
\sum_{\mu_1, \mu_2\in \Z^n}\, 
a(\mu_1, \mu_2)\,  
g^{1}_{\mu_1, k_1}(x)
g^{2}_{\mu_2, k_2}(x), 
\label{Taotimesf1f2-F1F2}
\end{align}
where 
\begin{equation*}
g^{j}_{\mu_j, k_j}(x) = 
\phi_{k_j}(D- \mu_j) f_{j} (x)
=
e^{- 2\pi i K^{-1}k_j \cdot \mu_j} 
\phi (D- \mu_j) f_{j} (x + K^{-1}k_j), 
\quad j=1,2. 
\end{equation*}

Notice that 
$g^{j}_{\mu_j, k_j}$, $j=1,2$, satisfy 
\begin{align}
&
\supp \calF (g^{j}_{\mu_j, k_j})
\subset \{ \zeta \mid |\zeta - \mu_j| \lesssim 1\}, 
\label{FouriersuppFj}
\\
&
\Big\| 
\big\|g^{j}_{\mu_j, k_j}(x) \big\|_{\ell^{q_j}_{\mu_j}} 
\Big\|_{L^{p_j}_x}
=
\Big\| 
\big\|
\phi (D- \mu_j) f_{j} (x + K^{-1}k_j)
\big\|_{\ell^{q_j}_{\mu_j}} 
\Big\|_{L^{p_j}_x}
\approx 
\|f_j\|_{W^{p_j, q_j}}; 
\label{FjfjW}
\end{align}
notice that 
the quantities in 
\eqref{FjfjW} do not depend on $k_1, k_2$.

From 
\eqref{FouriersuppFj}, it follows that  
\begin{equation}\label{FouriersuppF1F2}
\supp \calF (g^{1}_{\mu_1, k_1}
g^{2}_{\mu_2, k_2})
\subset 
\{ \zeta \mid |\zeta - \mu_1 - \mu_2| \lesssim 1\} . 
\end{equation}
Let 
$\kappa$ be the function 
used in the definition of 
the quasi-norm of 
Wiener amalgam spaces. 
Then, since $\kappa$ has a compact support, 
we see that 
$\kappa (D - \mu) \big(
g^{1}_{\mu_1, k_1}
g^{2}_{\mu_2, k_2}\big) \neq 0$ 
only if $|\mu_1 + \mu_2 - \mu |\lesssim 1$. 
This fact and \eqref{Taotimesf1f2-F1F2} yield 
\begin{equation}\label{eq-kappaDmuh}
\kappa (D - \mu)
\big(  
T_{  a, \phi_{k_1} \otimes \phi_{k_2} }
(f_1,f_2)
\big) 
=\kappa (D-\mu) 
h_{\mu, k_1, k_2} 
\end{equation}
with 
\begin{equation*}
h_{\mu, k_1, k_2}
=
\sum_{|\tau|\lesssim 1}\; 
\sum_{
\mu_1 + \mu_2 = \mu + \tau}
\;  
a(\mu_1, \mu_2) 
g^{1}_{\mu_1, k_1} g^{2}_{\mu_2, k_2}. 
\end{equation*}
By 
\eqref{FouriersuppF1F2}, 
the Fourier transform of 
$h_{\mu, k_1, k_2 }$ has 
a compact support of diameter 
$\lesssim 1$. 
Hence \eqref{eq-kappaDmuh} and 
Lemma \ref{varphigmu} imply  
\begin{align*}
&
\big\|  
T_{  a, \phi_{k_1} \otimes \phi_{k_2} }
(f_1,f_2)
\big\|_{W^{p,q}}
=
\Big\|
\big\|  
\kappa (D - \mu) 
\big( T_{  a, \phi_{k_1} \otimes \phi_{k_2} }
(f_1,f_2)
\big) (x) 
\big\|_{\ell^{q}_{\mu}} 
\Big\|_{L^p_x}
\\
&
=
\Big\|
\big\|
\kappa (D - \mu) h_{\mu, k_1, k_2}(x) \big\|_{\ell^q_{\mu}} 
\Big\|_{L^p_x}
\lesssim 
\Big\|
\big\|
h_{\mu, k_1, k_2}(x) 
\big\|_{\ell^q_{\mu}} 
\Big\|_{L^p_x}. 
\end{align*}
Using the definition of 
$\|S_a\|_{\ell^{q_1}\times \ell^{q_2}\to \ell^q}$  
and H\"older's inequality with 
exponents $1/p_1 + 1/p_2 = 1/p$, 
we obtain 
\begin{align*}
&
\Big\|
\big\|
h_{\mu, k_1, k_2}(x) 
\big\|_{\ell^q_{\mu}} 
\Big\|_{L^p_x}
\\
&
=
\Bigg\|
\bigg\| 
\sum_{|\tau|\lesssim 1}\, 
\sum_{\mu_1 + \mu_2 = \mu + \tau} 
a(\mu_1, \mu_2)
g^{1}_{\mu_1, k_1}(x) g^{2}_{\mu_2, k_2} (x)
\bigg\|_{\ell^q_{\mu}} 
\Bigg\|_{L^p_{x}}
\\
&
\lesssim 
\Bigg\|
\bigg\| 
\sum_{
\mu_1 + \mu_2 = \mu} 
a(\mu_1, \mu_2)
g^{1}_{\mu_1, k_1}(x) g^{2}_{\mu_2, k_2}(x)
\bigg\|_{\ell^q_{\mu}} 
\Bigg\|_{L^p_{x}}
\\
&
\le 
\left\|
\|S_a\|_{\ell^{q_1}\times \ell^{q_2}\to \ell^q}
\|g^{1}_{\mu_1, k_1}(x)\|_{\ell^{q_1}_{\mu_1}}
\|g^{2}_{\mu_2, k_2}(x)\|_{\ell^{q_2}_{\mu_2}}
\right\|_{L^p_x}
\\
&
\le 
\|S_a\|_{\ell^{q_1}\times \ell^{q_2}\to \ell^q}
\Big\| 
\big\| 
g^{1}_{\mu_1, k_1}(x)
\big\|_{\ell^{q_1}_{\mu_1}} 
\Big\|_{L^{p_1}_x}
\Big\|
\big\| 
g^{2}_{\mu_2, k_2}(x)
\big\|_{\ell^{q_2}_{\mu_2}} 
\Big\|_{ L^{p_2}_x}.  
\end{align*}

Now 
combing the above inequalities with 
\eqref{FjfjW}, 
we obtain 
\[
\big\|  
T_{  a, \phi_{k_1} \otimes \phi_{k_2} }
(f_1,f_2)
\big\|_{W^{p,q}}
\lesssim 
\|S_a\|_{\ell^{q_1}\times \ell^{q_2}\to \ell^q}
\|f_1\|_{W^{p_1, q_1}}
\|f_2\|_{W^{p_2, q_2}}, 
\]
which implies 
\eqref{eq-Wiener-amalgam-latter-goal}. 
Thus 
the latter 
inequality of Theorem \ref{th-Wiener-amalgam-main} 
is proved. 
\end{proof}

Next, we shall 
prove 
the former inequality of 
Theorem \ref{th-Wiener-amalgam-main}.

\begin{proof}[{Proof of the former inequality of 
Theorem \ref{th-Wiener-amalgam-main}}] 
Here we shall prove 
the inequality 
\begin{equation}\label{eq-Wiener-amalgam-former}
\|S_a\|_{\ell^{q_1}\times \ell^{q_2}\to \ell^q}
\le c 
\left\|
T_{a, \Phi }
\right\|_{W^{p_1, q_1}\times W^{p_2, q_2} \to W^{p,q}} 
\end{equation}

Since $\Phi $ satisfies the condition (B), by the same reason as in 
Proof of the former inequality 
of Theorem \ref{th-amalgam-main}, 
we can take 
$\boldsymbol{\xi}^{0}=(\xi^0_1, \xi^0_2)\in \R^{2n}$ 
and functions 
$\theta_1, \theta_2\in C_{0}^{\infty}(\R^n)$ that satisfy 
\eqref{Phineq0}, 
\eqref{Phisupp}, 
\eqref{thetajsupp}, 
and \eqref{thetaPhi}.

We take a function  
$\kappa\in C_{0}^{\infty}(\R^n)$ 
such that 
\begin{equation*}
\sum_{\mu \in \Z^n} \kappa (\xi - \mu)= 1
\;\;\text{for all}\;\; 
\xi \in \R^n
\end{equation*}
and 
\begin{equation}\label{kappa2}
|\xi|<2 \epsilon \; 
\Rightarrow 
\; 
\kappa (\xi -\mu) =
\begin{cases}
{1} & \text{if $\mu=0$, } \\
{0} & \text{otherwise.}
\end{cases}
\end{equation}
where 
$\epsilon$ is the number in \eqref{Phisupp}. 
Such a $\kappa$ certainly exists  
if $\epsilon$ is chosen sufficiently small.

Now let 
$b_1, b_2\in X(\Z^n)$. 
We define 
$f_1, f_2\in \calS (\R^n)$ through Fourier transform  
by 
\[
\widehat{f_j} (\xi) = \sum_{\nu \in \Z^n}
b_j (\nu) \theta_j (\xi - \nu ), 
\quad 
\xi \in \R^n. 
\]

From 
\eqref{thetajsupp} and 
\eqref{kappa2}, we have 
\begin{equation*}
\kappa (\xi - \xi^0_j - \mu)
\widehat{f_j} (\xi) 
=
\kappa (\xi - \xi^0_j - \mu)
\sum_{\nu \in \Z^n}
b_j (\nu)
\theta_{j} (\xi - \nu)
=
b_j (\mu)
\theta_{j} (\xi - \mu)
\end{equation*}
and hence 
\[
\kappa (D - \xi^0_j - \mu)
f_j (x) 
=
b_j (\mu)
e^{2\pi i \mu \cdot x}
\calF^{-1}\theta_{j} (x). 
\]
Thus 
\begin{equation}\label{fjWFjqj}
\begin{split}
&
\|f_j\|_{W^{p_j, q_j}}
\approx 
\Big\|
\big\| 
\kappa (D - \xi^0_j - \mu)
f_j (x) 
\big\|_{\ell^{q_j}_{\mu}} 
\Big\|_{L^{p_j}_x}
\\
&
=
\Big\|
\big\| 
b_j (\mu)
e^{2\pi i \mu \cdot x}
\calF^{-1}\theta_{j} (x)
\big\|_{\ell^{q_j}_{\mu}} 
\Big\|_{L^{p_j}_x}
=
\|b_j\|_{\ell^{q_j}}
\|
\calF^{-1}\theta_j (x) 
\|_{L^{p_j}_x}
=c 
\|b_j\|_{\ell^{q_j}}. 
\end{split}
\end{equation}

On the other hand, 
just in the same way as we obtained 
\eqref{eq-iii} 
in Proof of the former inequality of 
Theorem \ref{th-amalgam-main}, 
we obtain 
\begin{align}
T_{a, \Phi}(f_1,f_2)(x)
&=
\sum_{ \mu_1, \mu_2 \in \Z^n }
\sum_{ \nu_1, \nu_2 \in \Z^n }
a(\mu_1, \mu_2)
b_1(\nu_{1})
b_2(\nu_{2})\, 
\iint_{\R^n \times \R^n}
e^{2\pi i x \cdot (\xi_1 + \xi_2 )} 
\nonumber 
\\
&
\quad \quad \times 
\Phi (\xi_1- \mu_1, \xi_2 -\mu_2)
\theta_1 (\xi_1 - \nu_1)
\theta_2 (\xi_2 - \nu_2)
\, d\xi_1 d\xi_2
\nonumber 
\\
&=
\sum_{ \mu_1, \mu_2 \in \Z^n }
a(\mu_1, \mu_2)
b_1(\mu_{1})
b_2(\mu_{2})\, 
e^{2\pi i x\cdot (\mu_1 + \mu_2)} 
g(x) 
\label{eq-Taotimesf1f2-F1F2modg}
\end{align}
with $g(x)$ given by 
\eqref{eq-def-g}. 
Observe that 
\begin{align*}
\supp \calF \big( e^{2\pi i x\cdot (\mu_1 + \mu_2)} 
g(x)\big) 
&
\subset 
\big\{ 
\xi_1 + \xi_2 + \mu_1 + \mu_2 
\mid 
\xi_1 \in 
\supp \theta_1, 
\;\; 
\xi_2 \in 
\supp \theta_2, 
\big\}
\\
&
\subset 
\big\{ 
\xi^{0}_1 + \xi^{0}_2 +\zeta + \mu_1 + \mu_2 
\mid 
\zeta \in 
\R^n 
\;\; 
|\zeta|< 2\epsilon 
\big\}. 
\end{align*}
Hence our choice of $\kappa$ (see \eqref{kappa2})  
implies that 
\begin{align*}
& 
\mu =  \mu_1 + \mu_2\;\; 
\Rightarrow 
\;\; 
\kappa (\xi - \xi^{0}_1 - \xi^{0}_2 - \mu)=1 
\;\; \text{on}\;\; 
\supp \calF \big( e^{2\pi i x\cdot (\mu_1 + \mu_2)} 
g(x)\big), 
\\
&
\mu \neq  \mu_1 + \mu_2\;\; 
\Rightarrow 
\;\; 
\kappa (\xi - \xi^{0}_1 - \xi^{0}_2 - \mu)=0 
\;\; \text{on}\;\; 
\supp \calF \big( e^{2\pi i x\cdot (\mu_1 + \mu_2)} 
g(x)\big), 
\end{align*}
and hence 
\begin{equation*}
\kappa (D_x - \xi^0_1 - \xi^0_2 -\mu)
\big( e^{2\pi i x\cdot (\mu_1 + \mu_2)} 
g(x)\big) 
=
\begin{cases}
{e^{2\pi i x\cdot (\mu_1 + \mu_2)} 
g(x)} 
& \text{if $\mu_1 + \mu_2 = \mu$, } \\
{0} & \text{otherwise.}
\end{cases}
\end{equation*}
This relation and 
\eqref{eq-Taotimesf1f2-F1F2modg} imply 
\begin{align*}
&
\kappa (D_x - \xi^0_1 - \xi^0_2 -\mu) 
T_{a, \Phi}
(f_1,f_2)(x)
\\
&
=\sum_{ \mu_1+\mu_2=\mu  }\, 
a(\mu_1, \mu_2)
b_1(\mu_{1})
b_2(\mu_{2})\, 
e^{2\pi i x\cdot \mu } g(x) . 
\end{align*}
Recall that $|g(x)|\ge 1$ on $Q$ 
(see \eqref{thetaPhi}). 
Hence 
\begin{align*}
&
\|T_{a, \Phi}(f_1,f_2)\|_{W^{p,q}}
\\
&
\approx 
\Big\|
\big\|
\kappa (D_x - \xi^0_1 - \xi^0_2 -\mu)
T_{a, \Phi} (f_1,f_2)(x)
\big\|_{\ell^q_{\mu}} 
\Big\|_{L^p_x}
\\
&
=
\Bigg\|
\bigg\| 
\sum_{\mu_1+\mu_2=\mu }
a(\mu_1, \mu_2)
b_1(\mu_{1})
b_2(\mu_{2})\, 
e^{2\pi i x \cdot \mu} 
g(x) 
\bigg\|_{\ell^q_{\mu}} 
\Bigg\|_{L^p_x}
\\
&
=
\bigg\|
\sum_{\mu_1+\mu_2=\mu }
a(\mu_1, \mu_2)
b_1(\mu_{1})
b_2(\mu_{2})
\bigg\|_{\ell^{q}_{\mu}}
\, 
\| g \|_{L^p}
\\
&
\ge  
\bigg\|
\sum_{ \mu_1+\mu_2=\mu }
a(\mu_1, \mu_2)
b_1(\mu_{1})
b_2(\mu_{2})\bigg\|_{\ell^{q}_{\mu}}
=
\|S_a (b_1, b_2)\|_{\ell^q}. 
\end{align*}

Combining the above inequalities  
with \eqref{fjWFjqj}, 
we obtain 
\begin{align*}
&
\|S_a (b_1, b_2)\|_{\ell^q}
\lesssim 
\| 
T_{a, \Phi} (f_1, f_2)\|_{W^{p,q}}
\\
&
\le 
\|T_{a, \Phi}\|_{W^{p_1,q_1} \times W^{p_2, q_2} \to W^{p,q}}
\|f_1\|_{W^{p_1, q_1}} 
\|f_2\|_{W^{p_2, q_2}}
\\
&
\lesssim 
\|T_{a,\Phi}\|_{W^{p_1,q_1} \times W^{p_2, q_2} \to W^{p,q} }
\|b_1\|_{\ell^{q_1}} 
\|b_2\|_{\ell^{q_2}}, 
\end{align*}
which implies 
\eqref{eq-Wiener-amalgam-former}. 
Now the former inequality 
of 
Theorem \ref{th-Wiener-amalgam-main} 
is proved 
and hence 
the proof 
of Theorem \ref{th-Wiener-amalgam-main} 
is complete. 
\end{proof}

\section{Appendix}\label{sectionappendix}

Here we give proofs of the facts mentioned in 
Remark \ref{remark-amalgam} (3) and 
Remark \ref{remark-Wiener-amalgam} (2).

\begin{lmmx}\label{lem-Holder-A}
Let 
$\sigma \in L^{\infty}(\R^n \times \R^n) $, 
$\sigma \neq 0$, 
$p_1, p_2, p, q_1, q_2, q \in (0, \infty]$, and 
suppose  
$T_{\sigma}$ is bounded in 
$(L^{p_1}, \ell^{q_1}) \times  (L^{p_2}, \ell^{q_2}) \to (L^p, \ell^q)$. 
Then $1/q \le 1/q_1 + 1/q_2$. 
\end{lmmx}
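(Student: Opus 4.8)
The plan is to prove the contrapositive in quantitative form: I will test $T_{\sigma}$ on superpositions of widely separated translates and extract the scaling inequality $N^{1/q}\lesssim N^{1/q_1+1/q_2}$, which forces $1/q\le 1/q_1+1/q_2$ upon letting $N\to\infty$. The basic mechanism is the translation covariance $T_{\sigma}(f_1(\cdot-y),f_2(\cdot-y))=\big(T_{\sigma}(f_1,f_2)\big)(\cdot-y)$, immediate from the definition of $T_{\sigma}$. Since $\sigma\neq 0$, I first fix $f_1,f_2\in\calS(\R^n)$ with $\widehat{f_1},\widehat{f_2}\in C_{0}^{\infty}(\R^n)$ such that $H:=T_{\sigma}(f_1,f_2)\not\equiv 0$; such a pair exists, for otherwise $\sigma$ would integrate to zero against every product $\widehat{f_1}(\xi)\widehat{f_2}(\eta-\xi)$, forcing $\sigma=0$ almost everywhere. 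After translating $f_1,f_2$ we may assume $\|H\|_{L^p(Q)}=:\delta>0$. We may assume $q<\infty$, the case $q=\infty$ being trivial, and we record that $H\in(L^p,\ell^q)$ (because $T_{\sigma}$ is assumed bounded), so that $\|H\|_{L^p(Q+\rho)}\to 0$ as $|\rho|\to\infty$.

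Next, for $M\in\N$ put $A=\{0,\dots,M-1\}^n$, $N=M^n$, and for a large integer $R$ set $F_j=\sum_{k\in A}f_j(\cdot-Rk)$. On the input side, widely separated Schwartz translates give $\|F_j\|_{(L^{p_j},\ell^{q_j})}\lesssim N^{1/q_j}$ uniformly for large $R$, since the translates occupy essentially disjoint blocks of cubes and combine in $\ell^{q_j}$. On the output side I expand $T_{\sigma}(F_1,F_2)=\sum_{k\in A}H(\cdot-Rk)+E_R$, where $E_R$ collects the off-diagonal contributions $T_{\sigma}(f_1(\cdot-Rk),f_2(\cdot-Rk'))$ with $k\neq k'$. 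Restricting the $\ell^q$ norm to the $N$ cubes $Q+Rk_0$, $k_0\in A$, and noting that on $Q+Rk_0$ the main term $H(\cdot-Rk_0)$ has local $L^p$ norm exactly $\delta$, one would obtain $\|T_{\sigma}(F_1,F_2)\|_{(L^p,\ell^q)}\gtrsim \delta\,N^{1/q}$, provided the total error on each such cube is at most $\delta/2$.

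The hard part is precisely this control of the error on each diagonal cube, and here I would let $R\to\infty$ with $N$ fixed, so that only finitely many error terms need to be estimated. The non-diagonal self-interactions $H(\cdot-Rk)$ with $k\neq k_0$, restricted to $Q+Rk_0$, contribute $\|H\|_{L^p(Q+R(k_0-k))}$, which tends to $0$ as $R\to\infty$ because $H\in(L^p,\ell^q)$. For the genuine off-diagonal terms I would use band-limitedness: $T_{\sigma}(f_1(\cdot-Rk),f_2(\cdot-Rk'))$ has Fourier transform equal to $e^{-2\pi iRk'\cdot\eta}\Psi_{k-k'}(\eta)$ supported in the fixed compact set $\supp\widehat{f_1}+\supp\widehat{f_2}$, where $\Psi_{j}(\eta)=\int_{\R^n}\sigma(\xi,\eta-\xi)\widehat{f_1}(\xi)\widehat{f_2}(\eta-\xi)e^{-2\pi iR j\cdot\xi}\,d\xi$. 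For $j=k-k'\neq 0$ the Riemann--Lebesgue lemma gives $\|\Psi_{k-k'}\|_{L^\infty}\to 0$ as $R\to\infty$; since the Fourier support is a fixed compact set, the local $L^p$ norm of such a band-limited function over any unit cube is dominated by its $L^\infty$ norm and hence also tends to $0$. As there are only $O(N^2)$ error terms for fixed $N$, I can choose $R=R_N$ large enough that the error on every diagonal cube is at most $\delta/2$, which closes the lower bound.

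Putting the two sides together, boundedness of $T_{\sigma}$ gives $\delta\,N^{1/q}\lesssim \|T_{\sigma}\|\,N^{1/q_1+1/q_2}$ for every $N=M^n$ with $M\in\N$; letting $M\to\infty$ forces $1/q\le 1/q_1+1/q_2$, as desired. The quasi-norm regime $p,q,q_1,q_2<1$ affects only the triangle inequalities, which I would replace by their $\min(\cdot,1)$-power versions; this changes none of the exponents $N^{1/q}$ and $N^{1/q_j}$ and so does not affect the conclusion. The one genuinely delicate point, and the main obstacle, is the off-diagonal control above, which is why the band-limited choice of $f_1,f_2$ together with the $R\to\infty$ limit is essential.
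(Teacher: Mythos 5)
Your argument is correct, but it is a genuinely different route from the paper's. The paper works with a single concentrated wave packet: it picks a Lebesgue point $(\xi_0,\eta_0)$ of $\sigma$ with $\sigma(\xi_0,\eta_0)\neq 0$, tests $T_\sigma$ on the modulated dilates $f_\epsilon(x)=e^{2\pi i \xi_0\cdot x}\varphi(\epsilon x)$, $g_\epsilon(x)=e^{2\pi i \eta_0\cdot x}\varphi(\epsilon x)$, and reads off the scaling $\epsilon^{-n/q}\lesssim \epsilon^{-n/q_1-n/q_2}$ as $\epsilon\to 0$; the Lebesgue-point property is what makes the output essentially $\sigma(\xi_0,\eta_0)e^{2\pi i x\cdot(\xi_0+\eta_0)}\varphi(\epsilon x)^2$ plus a uniformly small error. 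You instead keep the profile fixed and let the \emph{count} $N$ of widely separated integer translates play the role of the scaling parameter, using translation covariance for the diagonal terms and a Riemann--Lebesgue argument to kill the $O(N^2)$ cross terms in the iterated limit ($R\to\infty$ for fixed $N$, then $N\to\infty$). What the paper's approach buys is brevity: one family of test functions, one limit, no cross terms, and no need for the preliminary (density-type) argument that $\sigma\neq 0$ forces some $T_\sigma(f_1,f_2)\not\equiv 0$. What yours buys is that it isolates exactly which structural features are used (translation covariance, band-limitedness, decay of local norms of a fixed output), and the same template adapts to other necessity statements of this kind. One small repair: the Riemann--Lebesgue lemma gives $\Psi_{k-k'}(\eta)\to 0$ pointwise in $\eta$, not uniformly over the family $\{\sigma(\cdot,\eta-\cdot)\widehat{f_1}(\cdot)\widehat{f_2}(\eta-\cdot)\}_\eta$ for a general $L^\infty$ symbol, so you should not claim $\|\Psi_{k-k'}\|_{L^\infty}\to 0$ directly; instead bound the off-diagonal term in $L^\infty$ by $\|\Psi_{k-k'}\|_{L^1}$ (its Fourier transform is $\Psi_{k-k'}$ supported in a fixed compact set) and get $\|\Psi_{k-k'}\|_{L^1}\to 0$ from pointwise Riemann--Lebesgue plus dominated convergence in $\eta$. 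With that adjustment the error control closes and the proof is complete.
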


\begin{proof}
Take a function $\varphi \in \calS (\R^n)$ such that 
$\supp \widehat{\varphi} \subset \{|\xi|\le 1\}$ 
and 
$|\varphi (x)|\ge 1$ for $x \in Q$. 
Take a Lebesgue point $(\xi_0, \eta_0)$ of 
$\sigma$ such that 
$\sigma (\xi_0, \eta_0)\neq 0$ and  
define 
$f_{\epsilon}$ and $g_{\epsilon}$ for $0<\epsilon <1$ 
by 
\begin{align*}
&
\widehat{f_{\epsilon}}(\xi) 
=\epsilon^{-n} \widehat{\varphi} \big( \epsilon^{-1}(\xi -\xi_0)\big) , 
\quad 
\widehat{g_{\epsilon}}(\eta) 
= \epsilon^{-n} \widehat{\varphi} \big( \epsilon^{-1}(\eta -\eta_0)\big), 
\\
&
f_{\epsilon}(x) = e^{2\pi i \xi_0 \cdot x} \varphi (\epsilon x), 
\quad 
g_{\epsilon}(x) = e^{2\pi i \eta_0 \cdot x} \varphi (\epsilon x). 
\end{align*}

Then 
$\ichi_{Q}(\epsilon x) \le 
|f_{\epsilon}(x)| 
=|g_{\epsilon}(x)| 
\lesssim (1+ \epsilon |x|)^{-N}$ with any $N>0$. 
From this we easily see that 
$\big\|f_\epsilon\big\|_{(L^{p_1}, \ell^{q_1})}
\approx  
\epsilon^{-n/q_1}$ 
and 
$\big\|g_\epsilon\big\|_{(L^{p_2}, \ell^{q_2})}
\approx  
\epsilon^{-n/q_2}$ 
for $0<\epsilon < 1$.

On the other hand, $T_{\sigma} (f_{\epsilon}, g_{\epsilon})(x)$ is written as 
\begin{align*}
T_{\sigma} (f_{\epsilon}, g_{\epsilon})(x)
&
=
\iint 
e^{2\pi i x \cdot (\xi+\eta)} 
\big( \sigma(\xi, \eta) - \sigma (\xi_0, \eta_0) \big) 
\widehat{f_{\epsilon}}(\xi)
\widehat{g_{\epsilon}}(\eta)
\, d\xi d\eta
\\
&
\quad 
+\iint 
e^{2\pi i x \cdot (\xi+\eta)} 
\sigma(\xi_0, \eta_0) 
\widehat{f_{\epsilon}}(\xi)
\widehat{g_{\epsilon}}(\eta)
\, d\xi d\eta
=
A + B, 
\quad \text{say}. 
\end{align*}
Since 
$(\xi_0, \eta_0)$ is a Lebesgue point of 
$\sigma$, the term $A$ tends to $0$ uniformly in 
$x \in \R^n$ as $\epsilon \to 0$. 
For the term $B$, 
we have 
$B
=\sigma(\xi_0, \eta_0) 
e^{2\pi i x \cdot (\xi_0 +\eta_0)} 
\varphi (\epsilon x)^2$, 
and hence our choice of $\varphi$ implies 
$|B|\ge |\sigma (\xi_0, \eta_0)|\ichi_{Q}(\epsilon x)$. 
Hence for all sufficiently small $\epsilon$ we have 
$|T_{\sigma}(f_{\epsilon}, g_{\epsilon})(x)| 
\ge 
2^{-1}|\sigma (\xi_0, \eta_0)|\, \ichi_{Q}(\epsilon x)$   
and thus 
\begin{align*}
&
\big\| T_{\sigma}(f_{\epsilon}, g_{\epsilon}) \big\|_{(L^p, \ell^q)}
=
\Big\| 
\big\|
 T_{\sigma}(f_{\epsilon}, g_{\epsilon}) (z+ \rho)
\big\|_{L^p_z (Q)}
\Big\|_{\ell^{q}_{\rho}(\Z^n)}
\\
&
\gtrsim 
\Big\| 
\big\| 
 |\sigma (\xi_0, \eta_0)|\, 
 \ichi_{Q} \big(\epsilon (z + \rho) \big)
 \big\|_{L^p_z (Q)}
\Big\|_{\ell^{q}_{\rho}(\Z^n)}
\approx 
 |\sigma (\xi_0, \eta_0)|\, 
\epsilon ^{-n/q}. 
\end{align*}

If $T_{\sigma}$ is bounded 
in 
$(L^{p_1}, \ell^{q_1}) \times  (L^{p_2}, \ell^{q_2}) \to (L^p, \ell^q)$, 
then the inequalities obtained above imply 
$\epsilon ^{-n/q}
=O\big( 
\epsilon^{-n/q_1} \epsilon^{-n/q_2}\big)$ 
as $\epsilon \to 0$, 
which holds only when $1/q \le 1/q_1 + 1/q_2$. 
\end{proof}

\begin{lmmx}\label{lem-Holder-WA}
Let 
$\sigma \in L^{\infty}(\R^n \times \R^n) $, 
$\sigma \neq 0$,  
$p_1, p_2, p, q_1, q_2, q \in (0, \infty]$, and suppose 
$T_{\sigma}$ is 
bounded in 
$W^{p_1, q_1} \times  W^{p_2, q_2} \to W^{p, q}$. 
Then 
$1/p \le 1/p_1 + 1/p_2$. 
\end{lmmx}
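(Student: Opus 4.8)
The plan is to mirror the proof of Lemma \ref{lem-Holder-A}, using the same modulated Schwartz bumps but now testing the spatial (that is, $L^p$) indices instead of the $\ell^q$ indices. Since the claim is vacuous when $p=\infty$ (then $1/p=0\le 1/p_1+1/p_2$), I assume $p<\infty$. Fix $\varphi\in\calS(\R^n)$ with $\supp\widehat{\varphi}\subset\{|\xi|\le 1\}$ and $\varphi\not\equiv 0$, choose a Lebesgue point $(\xi_0,\eta_0)\in\R^n\times\R^n$ of $\sigma$ with $\sigma(\xi_0,\eta_0)\neq 0$ (such a point exists because $\sigma\neq 0$), and for $0<\epsilon<1$ set $\widehat{f_\epsilon}(\xi)=\epsilon^{-n}\widehat{\varphi}(\epsilon^{-1}(\xi-\xi_0))$ and $\widehat{g_\epsilon}(\eta)=\epsilon^{-n}\widehat{\varphi}(\epsilon^{-1}(\eta-\eta_0))$, so that $f_\epsilon(x)=e^{2\pi i\xi_0\cdot x}\varphi(\epsilon x)$ and $g_\epsilon(x)=e^{2\pi i\eta_0\cdot x}\varphi(\epsilon x)$. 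Then $\widehat{f_\epsilon}$ and $\widehat{g_\epsilon}$ are supported in balls of radius $\epsilon\le 1$ about $\xi_0$ and $\eta_0$.

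The pivotal ingredient, and the place where this argument departs from Lemma \ref{lem-Holder-A}, is the equivalence
\[
\|h\|_{W^{p,q}}\approx\|h\|_{L^p}\quad\text{whenever}\quad \supp\widehat{h}\subset\{|\zeta|\le R\},
\]
with constants depending only on $n,p,q,R$ and $\kappa$. The upper bound follows from Lemma \ref{varphigmu}: only $O(R^n)$ indices $k$ give $\kappa(D-k)h\neq 0$, so applying that lemma with $g_\mu=h$ for the relevant $\mu$ and $g_\mu=0$ otherwise bounds $\|h\|_{W^{p,q}}$ by $\|h\|_{L^p}$. For the reverse bound I would write $\Theta=\sum_k\kappa(\cdot-k)$, which is smooth, $1$-periodic and satisfies $|\Theta|\ge 1$, pick $\chi\in C_0^\infty(\R^n)$ equal to $1$ on $\{|\zeta|\le R\}$, and reconstruct $h=\sum_k (\chi/\Theta)(D)[\kappa(D-k)h]$ over the $O(R^n)$ relevant $k$; since $\chi/\Theta\in C_0^\infty$, each summand is controlled in $L^p$ by $\|\kappa(D-k)h\|_{L^p}\le\|h\|_{W^{p,q}}$ (using $\ell^\infty\hookrightarrow\ell^q$), once more via Lemma \ref{varphigmu} or inequality \eqref{bbb}.

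Granting this equivalence, the input norms are immediate: $\widehat{f_\epsilon}$ is band-limited at scale $\le 1$, so $\|f_\epsilon\|_{W^{p_1,q_1}}\approx\|f_\epsilon\|_{L^{p_1}}=\epsilon^{-n/p_1}\|\varphi\|_{L^{p_1}}$, and similarly for $g_\epsilon$. For the output I split $T_\sigma(f_\epsilon,g_\epsilon)=A+B$ exactly as in Lemma \ref{lem-Holder-A}, with $B=\sigma(\xi_0,\eta_0)e^{2\pi i x\cdot(\xi_0+\eta_0)}\varphi(\epsilon x)^2$ and $A$ carrying the factor $\sigma(\xi,\eta)-\sigma(\xi_0,\eta_0)$. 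Both $A$ and $B$ have Fourier support in $\{|\zeta-(\xi_0+\eta_0)|\le 2\epsilon\}$, so $\|T_\sigma(f_\epsilon,g_\epsilon)\|_{W^{p,q}}\approx\|A+B\|_{L^p}$. Writing $A(x)=e^{2\pi i x\cdot(\xi_0+\eta_0)}H_\epsilon(\epsilon x)$, the Lebesgue-point estimate already used in Lemma \ref{lem-Holder-A} yields the $x$-uniform bound $\|H_\epsilon\|_{L^\infty}=\|A\|_{L^\infty}\to 0$ as $\epsilon\to 0$. A dilation then gives $\|A+B\|_{L^p}=\epsilon^{-n/p}\|\sigma(\xi_0,\eta_0)\varphi^2+H_\epsilon\|_{L^p}$, and fixing a ball on which $\|\sigma(\xi_0,\eta_0)\varphi^2\|_{L^p}$ captures at least half its total mass while $\|H_\epsilon\|_{L^p(\text{ball})}\le|\text{ball}|^{1/p}\|H_\epsilon\|_{L^\infty}\to 0$ forces $\|A+B\|_{L^p}\gtrsim\epsilon^{-n/p}$ for all small $\epsilon$.

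Combining these, $\epsilon^{-n/p}\lesssim\|T_\sigma(f_\epsilon,g_\epsilon)\|_{W^{p,q}}\le\|T_\sigma\|_{W^{p_1,q_1}\times W^{p_2,q_2}\to W^{p,q}}\|f_\epsilon\|_{W^{p_1,q_1}}\|g_\epsilon\|_{W^{p_2,q_2}}\lesssim\epsilon^{-n/p_1-n/p_2}$, and letting $\epsilon\to 0^+$ with $\epsilon<1$ forces $n/p\le n/p_1+n/p_2$, which is the claim. I expect the main obstacle to be exactly this lower bound on the output: in Lemma \ref{lem-Holder-A} one exploited that the amalgam quasi-norm is monotone under pointwise domination, so a pointwise lower bound on $T_\sigma(f_\epsilon,g_\epsilon)$ sufficed, whereas the Wiener amalgam quasi-norm enjoys no such monotonicity because of the frequency cut-offs $\kappa(D-k)$. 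Neutralizing this is precisely the role of the band-limited equivalence $\|\cdot\|_{W^{p,q}}\approx\|\cdot\|_{L^p}$, after which the error term is controlled in $L^\infty$ (rather than merely pointwise) and localized to a fixed ball; the uniform smallness of $A$ rests only on the same Lebesgue-point computation already present in Lemma \ref{lem-Holder-A}, so no harmonic-analytic input beyond Lemma \ref{varphigmu} and \eqref{bbb} is required.
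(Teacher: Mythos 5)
Your argument is correct, and its skeleton (modulated, dilated Schwartz bumps at a Lebesgue point of $\sigma$, the split $T_\sigma(f_\epsilon,g_\epsilon)=A+B$, and the $\epsilon$-power count) coincides with the paper's. Where you genuinely diverge is in how the Wiener-amalgam quasi-norms are handled. You prove a two-sided equivalence $\|h\|_{W^{p,q}}\approx\|h\|_{L^p}$ for band-limited $h$ and then work entirely in $L^p$; the reverse direction of that equivalence (reconstruction via $(\chi/\Theta)(D)$ over finitely many frequency blocks, with the band-limited multiplier estimate coming from \eqref{bbb}) is the nonstandard part, and your sketch of it is sound, with uniformity in the center of the spectral ball being the only detail worth spelling out. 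The paper avoids this machinery on both ends: for the inputs it chooses $\kappa$ so that exactly one term of the $\ell^{q_j}$ sum survives for $f_\epsilon$, giving $\|f_\epsilon\|_{W^{p_1,q_1}}\approx\|f_\epsilon\|_{L^{p_1}}$ directly; for the output it simply discards all but the single term $\widetilde{\kappa}(D)T_\sigma(f_\epsilon,g_\epsilon)$ from the $\ell^q$ sum (a free lower bound, no reconstruction needed), absorbs $\widetilde{\kappa}(\xi+\eta)$ into the symbol, and reruns the Lebesgue-point split for $\widetilde{\kappa}(\xi+\eta)\sigma(\xi,\eta)$ to get a pointwise lower bound $\gtrsim\ichi_Q(\epsilon x)$. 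The paper's route is lighter; yours buys a reusable norm equivalence and, as you note, cleanly sidesteps the failure of pointwise monotonicity of $\|\cdot\|_{W^{p,q}}$, at the cost of also needing the harder direction $\|h\|_{L^p}\lesssim\|h\|_{W^{p,q}}$ which the paper never uses. Two trivial slips: the embedding you invoke should read $\ell^{q}\hookrightarrow\ell^{\infty}$ (a single term is dominated by the $\ell^q$ quasi-norm), and for $p<1$ the final lower bound on $\|\sigma(\xi_0,\eta_0)\varphi^2+H_\epsilon\|_{L^p}$ should be run through $p$-th powers rather than the reverse triangle inequality; neither affects the argument.
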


\begin{proof}
Take $\varphi$, 
$(\xi_0, \eta_0)$, $f_{\epsilon}$, and $g_{\epsilon}$ 
in the same way as in 
Proof of Lemma \ref{lem-Holder-A}. 

%
To estimate 
the quasi-norms of 
$f_{\epsilon}$ and $g_{\epsilon}$ in Wiener-amalgam spaces, 
take a function 
$\kappa \in C_{0}^{\infty}(\R^n)$ such that 
$\sum_{\mu \in \Z^n} \kappa (\xi - \mu) = 1$  
for all $\xi \in \R^n$ 
and that 
\[
|\xi |<1/10
\;\;
\Rightarrow \;\; 
\kappa (\xi )=1\;\; 
\text{and}\;\; 
\kappa (\xi - \mu)=0\;\; 
\text{for}\;\; 
\mu \in \Z^n \setminus \{0\}.  
\]
Then, for $0<\epsilon < 1/10$, 
we have 
$\kappa (D-\xi_0) 
f_{\epsilon}= f_{\epsilon}$ and 
$\kappa (D - \xi_0 - \mu) f_{\epsilon} = 0$ 
for $\mu \in \Z^n \setminus \{0\}$, 
and thus 
\begin{align*}
&
\big\|f_\epsilon\big\|_{W^{p_1, q_1}}
\approx 
\Big\| 
\big\| 
\kappa (D-\xi_0 - \mu)
f_\epsilon (x)
\big\|_{\ell^{q_1}_{\mu}(\Z^n)}
\Big\|_{L^{p_1}_{x}(\R^n)}
\\
&
=
\Big\| 
f_\epsilon (x)
\Big\|_{L^{p_1}_{x}(\R^n)}
=
\Big\| \varphi (\epsilon x) 
\Big\|_{L^{p_1}_{x}(\R^n)}
\approx 
\epsilon^{-n/p_1}.  
\end{align*}
Similarly 
we have 
$\big\|g_\epsilon\big\|_{W^ {p_2, q_2}}
\approx \epsilon^{-n/p_2}$ for 
$0<\epsilon <1/10$.

To estimate the $W^{p,q}$-quasi-norm of 
$T_{\sigma}(f_{\epsilon}, g_{\epsilon})$, we 
take a function 
$\widetilde{\kappa} \in C_{0}^{\infty}(\R^n)$ such that 
$
\widetilde{\kappa} (\xi_0 + \eta_0 )\neq 0$ 
and 
$\sum_{\mu \in \Z^n} \widetilde{\kappa} (\xi - \mu) \ge 1$ 
for all $\xi \in \R^n$.  
Then 
\begin{align*}
&
\widetilde{\kappa}(D) 
T_{\sigma} (f_{\epsilon}, g_{\epsilon})(x)
=
\iint 
e^{2\pi i x \cdot (\xi+\eta)} 
\widetilde{\kappa}(\xi + \eta)
\sigma(\xi, \eta) 
\widehat{f_{\epsilon}}(\xi )
\widehat{g_{\epsilon}}(\eta )
\, d\xi d\eta
\\
&
=
\iint 
e^{2\pi i x \cdot (\xi+\eta)} 
\bigg[ 
\widetilde{\kappa}(\xi + \eta)
\sigma(\xi, \eta) - 
\widetilde{\kappa}(\xi_0 + \eta_0)
\sigma (\xi_0, \eta_0) \bigg] 
\widehat{f_{\epsilon}}(\xi )
\widehat{g_{\epsilon}}(\eta )
\, d\xi d\eta
\\
&
\quad 
+\iint 
e^{2\pi i x \cdot (\xi+\eta)} 
\widetilde{\kappa}(\xi_0 + \eta_0)
\sigma (\xi_0, \eta_0)
\widehat{f_{\epsilon}}(\xi )
\widehat{g_{\epsilon}}(\eta )
\, d\xi d\eta
=
A+ B, 
\quad \text{say}.  
\end{align*}
Since 
$(\xi_0, \eta_0)$ is a Lebesgue point 
of 
$\widetilde{\kappa}(\xi + \eta)
\sigma (\xi, \eta) $, 
the term $A$ tends to $0$ 
uniformly in $x\in \R^n$ 
 as $\epsilon \to 0$.   
For the term $B$, we have 
$B
=
\widetilde{\kappa}(\xi_0 + \eta_0)
\sigma(\xi_0, \eta_0) 
e^{2\pi i x \cdot (\xi_0 + \eta_0)} 
\varphi (\epsilon x)^2  
$
and hence 
$|B|\ge |\widetilde{\kappa}(\xi_0 + \eta_0)\sigma (\xi_0, \eta_0)| \, 
\ichi _{Q}(\epsilon x)$. 
Hence for all sufficiently small $\epsilon $ we have 
$
\big|\widetilde{\kappa}(D)T_{\sigma}(f_{\epsilon}, g_{\epsilon})(x)
\big| \ge 
2^{-1}\,  
|\widetilde{\kappa}(\xi_0 + \eta_0)\sigma (\xi_0, \eta_0)| \, 
\ichi_{Q}(\epsilon x)$ 
and thus 
\begin{align*}
&
\big\| T_{\sigma}(f_{\epsilon}, g_{\epsilon}) \big\|_{W^{p, q}}
\approx 
\Big\| 
\big\|
\widetilde{\kappa} (D-\mu) 
T_{\sigma}(f_{\epsilon}, g_{\epsilon}) (x)
\big\|_{\ell^{q}_{\mu}(\Z^n)}
\Big\|_{L^p_x (\R^n)}
\\
&
\ge 
\big\| 
\widetilde{\kappa} (D) 
T_{\sigma}(f_{\epsilon}, g_{\epsilon}) (x)
\big\|_{L^p_x (\R^n)}
\gtrsim  
\Big\| 
|\widetilde{\kappa}(\xi_0 + \eta_0)\sigma (\xi_0, \eta_0)|\, 
\ichi_{Q} (\epsilon x) 
\Big\|_{L^{p}_{x}(\R^n)}
\\
&
\approx 
|\widetilde{\kappa}(\xi_0 + \eta_0)\sigma (\xi_0, \eta_0)|\,
\epsilon ^{-n/p}. 
\end{align*}

If  
$T_{\sigma}$ 
is bounded in 
$W^{p_1, q_1} \times  W^{p_2, q_2} \to W^{p, q}$, 
then 
the inequalities obtained above 
imply 
$\epsilon ^{-n/p}
=O\big( 
\epsilon^{-n/p_1} \epsilon^{-n/p_2}\big)$ as 
$\epsilon \to 0$, 
which holds only if $1/p \le 1/p_1 + 1/p_2$. 
\end{proof}


\end{document}